
\documentclass{amsart}
\usepackage[english]{babel}
\usepackage{amsfonts, amsmath, amsthm, amssymb,amscd,indentfirst}

\usepackage{esint}
\usepackage{graphicx}
\usepackage{epstopdf}
\usepackage{amsmath,amssymb,latexsym,indentfirst}
\usepackage{times}
\usepackage{palatino}

\newtheorem{theorem}{Theorem}[section]

\newtheorem{proposition}{Proposition}[section]
\newtheorem{lemma}{Lemma}[section]

\newtheorem{corollary}{Corollary}[section]

\newtheorem{example}{Example}[section]
\newtheorem{remark}{Remark}[section]

\def\R{\mathbb{R}}

\def\bp{\begin{proof}}
\def\ep{\end{proof}}

\def\R{{\cal R}}








\def\R{\mathbb{R}}

\def\B{\mathcal{B}}

\def\R{\mathbb{R}}

\def\H{\mathbb{H}}         
\def\S{\mathbb{S}}         
\def\B{\mathbb{B}}

\def\cb{\bar{\nabla}}
\def\nb{\bar{N}}
\def\x{\vec{x}}
\def\sumk{\sum_{k=1}^{n}}
\def\suml{\sum_{l=1}^{n}}

\def\gb{\bar{g}}

\def\aneleu{\mathcal{A}(r_{_1},r_{_2})}
\def\anelconf{\mathcal{A}(\bar{r}_{_1},\bar{r}_{_2})}

\begin{document}

\title[Uniqueness results for free-boundary minimal hypersurfaces]{Uniqueness results for free-boundary minimal hypersurfaces in conformally Euclidean balls and annular domains}

\author{Ezequiel Barbosa}
\address{Universidade Federal de Minas Gerais (UFMG), Departamento de Matem\'{a}tica, Caixa Postal 702, 30123-970, Belo Horizonte, MG, Brazil}
\email{ezequiel@mat.ufmg.br}
\author{Edno Pereira}
\address{Universidade Federal de Minas Gerais (UFMG), Departamento de Matem\'{a}tica, Caixa Postal 702, 30123-970, Belo Horizonte, MG, Brazil}
\email{edno@mat.ufmg.br}
\author{Rosivaldo Ant\^onio Goncalves}
\address{Universidade Estadual de Montes Claros (UNIMONTES), Departamento de Ciencias Exatas, Unimontes, Montes Claros, MG, Brazil}
\email{rosivaldo@unimontes.br}
\thanks{The authors were partially supported by  CNPq, CAPES and FAPEMIG/Brazil agency grants.}

\begin{abstract}
In this paper we prove that a flat free-boundary minimal $n$-disk, $n\geq3$, in the unit Euclidean ball $B^{n+1}$ is the unique compact free boundary minimal hypersurface in the unit Euclidean ball which the squared norm of the second fundamental form is less than either $\frac{n^2}{4}$ or $\frac{(n-2)^2}{4|x|^2}$. 
Moreover, we prove analogous results for compact free boundary minimal hypersurfaces in annular domains with a conformally Euclidean metric. 
\end{abstract}

\maketitle

\section{Introduction}\label{intro}

Free boundary minimal submanifolds are an important branch of Differential Geometry and have received much attention. A classical result due to J. C. C. Nitsche \cite{Ni} is the following: \emph{Let $D^2\subset \mathbb{R}^3$ be a proper branched minimal immersion with free boundary on the standard unit sphere, where $D^2$ is a disk. Then $D^2$ is a flat disk.} There has been much work extending this result in many different directions. For instance, Fraser-Schoen \cite{FSc} showed an extension of Nitsche's Theorem for surfaces with arbitrary codimension. However, there is no complete version of Nitsche's theorem for submanifolds with high dimension. From another side, as an  application of Nitsche's Theorem and Gauss-Bonnet Theorem we get the following: \emph{Let $\Sigma^2\subset \mathbb{R}^3$ be a compact free boundary minimal immersion in the standard unit Euclidean ball such that $|A|^2\leq4$, where $|A|^2$ denote the squared norm of the second fundamental form of $\Sigma$. Then $\Sigma^2$ is a flat disk.}  In order to see that, use the Gauss-Bonnet Theorem to obtain
\[
2\pi(2-2g-r)=\int_{\Sigma}(2-\frac{|A|^2}{2})da\geq0\,,
\]
where $g$ denotes the genus of $\Sigma$ and $r$ the number of its boundary components. Hence, either $g=0$ and $r=1$, or $g=0$ and $r=2$. If $g=0$ and $r=1$, it follows from Nitsche's Theorem that $\Sigma$ is a flat disk. If $g=0$ and $r=2$, then $|A|^2=4$. Applying Simon's equation $\Delta |A|^2=-2|A|^4+4|\nabla |A||^2$, which is valid for minimal surfaces in the 3-dimensional Euclidean space, we get a contradiction. In \cite{CMV}, M. Cavalcante, A. Mendes and F. Vit\'orio prove the same result considering high codimensional surfaces $\Sigma^2\subset B^{2+k}$. We point out that the constant $c=4$ is not the better one for that result. With a convergence argument, and using the result above with the inequality $|A|^2\leq4$, we can slightly improve that constant: there exists a positive $\varepsilon_0$ such that the only free boundary minimal surface $\Sigma\subset B^{2+k}$ satisfying $|A|^2\leq4+\varepsilon_0$ is the flat disk. In fact, if that is false, for each $\varepsilon>0$ such that $\varepsilon \rightarrow0$ there exists a free boundary minimal surface $\Sigma_{\varepsilon}$ which is not totally geodesic and 
\[
|A|^2\leq4+\varepsilon\,.
\] 
As the second fundamental form is uniformly bounded there exists a free boundary minimal surface $\Sigma$ such that, up to a subsequence, $\Sigma_{\varepsilon} \rightarrow\Sigma$ and the second fundamental form of $\Sigma$ satisfy $|A|^2\leq4$. Hence, $\Sigma$ is a flat disk. As  $\Sigma_{\varepsilon} \rightarrow\Sigma$, we obtain that $\Sigma_{\varepsilon}$ is topologically a disk, for $\varepsilon$ small enough. Then, from the Nitsche-Fraser-Schoen's uniquenessTheorem, we obtain a contradiction.

Another application of Nitsche-Fraser-Schoen's Theorem is the following: \emph{Let $\Sigma^2\subset \mathbb{R}^{2+k}$ be a compact free boundary minimal immersion in the standard unit Euclidean ball such that $\int_{\Sigma}|A|^2da\leq4\pi$, then $\Sigma^2$ is a flat disk.} In fact, assume that $\Sigma$ has genus $g$ and $r$ boundary components. It follows from the Gauss-Bonnet Theorem that
\[
2\pi(2-2g-r)=\int_{\Sigma}(2-\frac{|A|^2}{2})da\geq2|\Sigma| - 2\pi\,.
\]
The result follows from the fact that $|\Sigma|\geq \pi$ and equality holds only for the flat disk (see, for instance, S. Brendle \cite{Br}, and Fraser-Schoen \cite{FSc2}), since this implies that $g=0$ and $r=1$. Note that this also shows that there exists no compact free boundary minimal surface in $B^{2+k}$ with $\int_{\Sigma}|A|^2da=4\pi$. As a direct consequence of this we have that: if $\Sigma$ is not totally geodesic then 
\[
||A||^2_{\infty}|\Sigma|>4\pi(2g+r-1)\,. 
\]
Therefore, the set of free boundary minimal surfaces satisfying $||A||^2_{\infty}|\Sigma|\leq C$, for some positive constant $C$, is a compact set. Again, with a convergence argument, we can slightly improve this: there exists a positive $\varepsilon_0$ such that the only free boundary minimal surface $\Sigma\subset B^{2+k}$ satisfying $\int_{\Sigma}|A|^2da\leq4\pi+\varepsilon_0$ is the flat disk.  

In the unit sphere, we also have similar results: Let $\Sigma^2 \subset B^{2+k}$ be a free boundary minimal surface, where $B^{2+k}$ is a geodesic ball contained in a hemisphere of $\mathbb{S}^{2+k}$. If either $|A|^2\leq2$ or $\int_{\Sigma}|A|^2da\leq4\pi+2|B^2|$, where $|B^2|$ is the area of a geodesic disk with the same radius as $B^{2+k}$ then $\Sigma$ is a totally geodesic disk.  The proof follows the same lines of the above discussion and the results in \cite{BMc}.

In the Hyperbolic space $\mathbb{H}^{2+k}$, similar results are obtained considering
\[
|A|^2\leq\frac{1}{4}\left(\sinh(r_q)\log\left(\frac{e^{r_q}+1}{e^{r_q}-1}\right)  \right)^{-2}
\]
where we are denoting with $r_q$ the extrinsic distance from a point $q\in\mathbb{H}^{2+k}$ evaluated along $\Sigma^2$.

Our first result is on that question concerning the extension of those results to high dimension:

\begin{theorem}\label{main1}
Let $B^{n+1}$, $n\geq3$, the unit ball in the Euclidean space $\mathbb{R}^{n+1}$.  Let $\Sigma^n \hookrightarrow B^{n+1}$ be a compact free boundary minimal hypersurface immersed in $B^{n+1}$. If either the inequality 
\begin{equation*}
\left|A\right|^2 \leq \frac{n^2}{4}\quad \mbox{or}\quad |x|^2\left|A\right|^2 \leq \frac{(n-2)^2}{4}
\end{equation*}
is satisfied on $\Sigma$, then $\Sigma$ is a totally geodesic disk $D^n$.
\end{theorem}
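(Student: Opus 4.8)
The plan is to reduce both statements to a single assertion: the support function $f=\langle x,N\rangle$, where $N$ is a unit normal and $x$ the position vector, vanishes identically on $\Sigma$. Two facts make $f$ the right object. Since the coordinate functions are harmonic on a minimal hypersurface, a short computation with the Weingarten equation gives the Jacobi-type identity $\Delta_\Sigma f+|A|^2f=0$. Moreover, the free-boundary condition forces $N$ to be tangent to $\mathbb{S}^n$ along $\partial\Sigma$, hence $N\perp x$ there, so $f\equiv0$ on $\partial\Sigma$. Integrating by parts against $f$ then yields $\int_\Sigma|\nabla f|^2=\int_\Sigma|A|^2f^2$, which is where each pinching hypothesis will enter.

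The core of the argument is a pair of Rellich--Pohozaev identities built from the tangential position field $x^T=x-fN$, for which $\operatorname{div}_\Sigma x^T=n$ and, since the outward conormal is $\nu=x$ on $\partial\Sigma$, one has $\langle x^T,\nu\rangle=1$ there. Completing the square in $0\le\int_\Sigma|\nabla f+\mu fX|^2$ with the two choices $X=x^T$ and $X=x^T/|x|^2$, and optimizing the free parameter, I would obtain
\[
\int_\Sigma|\nabla f|^2\ \ge\ \frac{n^2}{4}\int_\Sigma f^2+\frac{n^2}{4}\int_\Sigma f^2\bigl(1-|x|^2\bigr)+\frac{n^2}{4}\int_\Sigma f^4
\]
and, using $\operatorname{div}_\Sigma\!\bigl(x^T/|x|^2\bigr)=(n-2)|x|^{-2}+2f^2|x|^{-4}$ together with $|x^T|^2=|x|^2-f^2$,
\[
\int_\Sigma|\nabla f|^2\ \ge\ \frac{(n-2)^2}{4}\int_\Sigma\frac{f^2}{|x|^2}+\frac{n^2-4}{4}\int_\Sigma\frac{f^4}{|x|^4}.
\]
The point is that after the optimal choices ($\mu=n/2$ and $X$-parameter $(n-2)/2$) the leftover terms are manifestly nonnegative.

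Feeding in the hypotheses now closes both cases. If $|A|^2\le n^2/4$, the first display and $\int_\Sigma|\nabla f|^2=\int_\Sigma|A|^2f^2\le\frac{n^2}{4}\int_\Sigma f^2$ force $\int_\Sigma f^2(1-|x|^2)+\int_\Sigma f^4\le0$; since $|x|\le1$ on $\Sigma$ — which follows from $\Delta_\Sigma|x|^2=2n>0$ and the maximum principle with $|x|^2=1$ on $\partial\Sigma$ — both terms are nonnegative, whence $f\equiv0$. If $|x|^2|A|^2\le(n-2)^2/4$, the second display and $\int_\Sigma|A|^2f^2\le\frac{(n-2)^2}{4}\int_\Sigma f^2/|x|^2$ give $\frac{n^2-4}{4}\int_\Sigma f^4/|x|^4\le0$, and again $f\equiv0$ because $n\ge3$. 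In either case $\langle x,N\rangle\equiv0$ means the position field is everywhere tangent, so $\Sigma$ is invariant under dilations, i.e.\ a minimal cone with vertex at the origin; being a smooth immersed hypersurface it must be a piece of a hyperplane through $0$, which meets $\mathbb{S}^n$ orthogonally, namely the flat equatorial disk $D^n$.

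The steps I expect to be most delicate are the two that couple the origin with the boundary. First, when $0\in\Sigma$ the field $x^T/|x|^2$ is singular, so the integration by parts in the second case needs justification; here $n\ge3$, and the fact that $f$ vanishes to second order at a zero of $x$ (because $\nabla_\Sigma f=-A(x^T)$), make the weighted integrals converge and the flux through a small geodesic sphere about $0$ tend to zero, so no spurious interior boundary term appears. Second, turning ``$\Sigma$ is a smooth minimal cone'' into ``totally geodesic'' must be done carefully, using that a regular minimal cone in $\mathbb{R}^{n+1}$ has a great-sphere link and is therefore a hyperplane. Everything else is a routine computation organized around the single identity $\Delta_\Sigma f+|A|^2f=0$.
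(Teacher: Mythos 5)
Your proof is correct, and it reaches the paper's conclusion by the same overall strategy (the support function $f=\langle x,N\rangle$ satisfies $\Delta_\Sigma f+|A|^2f=0$ with $f=0$ on $\partial\Sigma$, and a Hardy-type inequality forces $f\equiv0$), but you implement the key analytic step differently. The paper quotes the Batista--Mirandola--Vit\'orio Hardy inequality for submanifolds of Hadamard spaces as a black box, applies it with $\gamma=0$ (resp.\ $\gamma=2$) to the first Dirichlet eigenfunction of the Jacobi operator $L=\Delta+|A|^2$, and concludes $\lambda_1(L)>0$, so the Jacobi field $f$ must vanish; the rigidity there hinges on the equality clause of the quoted theorem. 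You instead derive the needed inequalities from scratch by expanding $0\le\int_\Sigma|\nabla f+\mu fX|^2$ with $X=x^T$ and $X=x^T/|x|^2$ and optimizing $\mu$, which is essentially the internal mechanism of the cited inequality. What your route buys is that the remainder terms $\frac{n^2}{4}\int f^2(1-|x|^2)+\frac{n^2}{4}\int f^4$ and $\frac{n^2-4}{4}\int f^4/|x|^4$ are kept explicitly, so $f\equiv0$ drops out directly without any eigenvalue argument or appeal to an equality-case characterization; it also makes the proof self-contained. Your treatment of the two delicate points is adequate: the origin singularity is handled correctly since $f$ and $\nabla f=-A(x^T)$ both vanish where $x=0$, so $f=O(|x|^2)$ and all weighted integrals and flux terms are controlled for $n\ge3$; and the maximum principle for the subharmonic function $|x|^2$ gives $|x|\le1$. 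The only place I would add a line is the final step: $\langle x,N\rangle\equiv0$ makes $\Sigma$ a piece of a cone, but to invoke smoothness at the vertex you should first check that $0$ actually lies on $\Sigma$ (otherwise a truncated nontrivial minimal cone is a counterexample, as the paper's own annulus section shows). This follows because $|x|^2$ is subharmonic on $\Sigma$ and any interior critical point has $x^T=0$, which together with $\langle x,N\rangle=0$ forces $x=0$; compactness and the impossibility of $|x|\equiv1$ then put the origin on $\Sigma$. The paper is equally terse on this point, so this is a refinement rather than a gap.
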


Very recently, M. Cavalcante, A. Mendes and F. Vit\'orio \cite{CMV}, using a different method, has been able to prove topological results for high codimensional free boundary submanifolds in the unit Euclidean ball considering an explicit upper bound for the number $||\Phi||^2:=|A|^2-n|\vec{H}|^2$, where $\vec{H}$ is the mean curvature vector.

As the Nitsche's Theorem has also a valid version in some Euclidean conformal spaces, we can ask about the validity of a version of the above result for a class of Euclidean conformal spaces. Let $(\mathbb{B}_R^{n+1},\bar{g})$, $n\geq3$, be a Euclidean ball with radius $R$ and centred at origin, and with a conformal metric $\bar{g}=e^{2h}\left\langle ,\right\rangle$, where $R<\infty$ or $R=\infty$ and $h(x)=u(\left|x\right|^2)$, $u:[0,R^2) \rightarrow \R$ being a smooth function. Note that if $r$ is the Euclidean distance from a point $x \in \mathbb{B}_R^{n+1}$ to the origin then the distance $\bar{r}$, from $x$ to the origin with respect to the conformal metric $\bar{g}$, is given by
\begin{equation*}
\bar{r}=rI(r)\,, 
\end{equation*}
where $I(r)=\displaystyle \int_{0}^{1} e^{u(t^2r^2)} dt$. We write $B_{\bar{R}}$ to denote $(\mathbb{B}_R^{n+1},\bar{g})$, and $\bar{r}$ to denote the distance function. Suppose that function $u$ satisfies the following conditions:\\
$i) u''(\left|x\right|^2)-u'(\left|x\right|^2)^2\leq0 $ \\
$ii)  -u''(\left|x\right|^2)\left|\vec{x}\right|^2 - u'(\left|x\right|^2) \leq  0 $ \\
We can check that $B_{\bar{R}}$ is a Hadamard space (see section 2).

\begin{example} Consider $u_1:[0,\infty) \rightarrow \R$ given by $u_1\equiv0$ and $u_1:[0,1) \rightarrow \R$ given by $u_2(t)=\ln(\frac{2}{1-t})$. The functions $u_1$ and $u_2$ satisfy the conditions $i)$ and $ii)$ above. In fact, $M_1:=(\B_{\infty}^{n+1},\bar{g})$ for $\gb=e^{2u_1(\left|x\right|^2)}\left\langle \right\rangle$ is the Euclidian space $\R^{n+1}$ and $M_2:=(\B_{1}^{n+1},\bar{g})$, for $\gb=e^{2u_2(\left|x\right|^2)}\left\langle \right\rangle$, is the Hyperbolic space $\H^{n+1}$. Both are Hadamard spaces.
\end{example}

\begin{example} Consider $u:[0,\infty) \rightarrow \R$ given by $u(t)=\frac{t}{4n}$. Hence, the conditions $i)$ and $ii)$ are satisfied for all  $t$. Therefore, $(\B_r^{n+1},e^{\frac{\left|x\right|^2}{2n}}\left\langle , \right\rangle)$ with $r=\infty$ is a Hadamard space.
\end{example}

For those spaces, we obtain the following gap results.

\begin{theorem}\label{main2} Let $\Sigma^n \hookrightarrow B_{\bar{r}_0} \subset B_{\bar{R}}$ be a compact free boundary minimal hypersurface such that
\begin{equation*}
\left|A\right|^2 \leq \frac{n^2}{4\bar{r}_0^2}\,.
\end{equation*}
Then $\Sigma$ is a totally geodesic disk $D^n$ passing through the center of the ball.
\end{theorem}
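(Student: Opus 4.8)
\textit{Proof strategy.} The plan is to adapt the classical support-function argument to the rotationally symmetric Hadamard geometry of $B_{\bar R}$. Since $\bar g=e^{2h}\langle,\rangle$ with $h$ radial, in geodesic polar coordinates about the center $o$ the metric is a warped product $\bar g=d\bar r^2+\phi(\bar r)^2 g_{\mathbb{S}^n}$ with $\phi(\bar r)=|x|e^{u(|x|^2)}$, so the radial field $V:=\phi(\bar r)\,\bar\nabla\bar r$ is closed and conformal, $\bar\nabla_Y V=\phi'(\bar r)\,Y$ for every $Y$. First I would take the potential $F$ with $\bar\nabla F=V$ (equivalently $\overline{\mathrm{Hess}}\,F=\phi'(\bar r)\,\bar g$), restrict it to $\Sigma$, and record the Gauss-type identity $\mathrm{Hess}_\Sigma F=\phi'(\bar r)\,\bar g|_\Sigma+w\,A$, where $w:=\bar g(V,\bar N)$ is the conformal support function. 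Tracing and using minimality gives $\Delta_\Sigma F=n\,\phi'(\bar r)$, while the trace-free part (here $A$ is trace-free) yields the pointwise relation $w^2|A|^2=|\mathrm{Hess}_\Sigma F|^2-n\,\phi'(\bar r)^2$. Differentiating $w$ once more and invoking the contracted Codazzi equation produces a Jacobi-type equation $\Delta_\Sigma w+\big(|A|^2+Q\big)w=0$, in which the curvature term $Q$ is built from $\overline{\mathrm{Ric}}(\bar N,\cdot)$ and is governed precisely by the signs $u''-(u')^2\le 0$ and $-u''|x|^2-u'\le 0$ of hypotheses $i)$ and $ii)$.

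The free-boundary condition enters next. Because $\Sigma$ meets the geodesic sphere $\partial B_{\bar r_0}$ orthogonally, the outward conormal $\nu$ of $\partial\Sigma$ in $\Sigma$ coincides with the radial direction $\bar\nabla\bar r$; hence $V$ is tangent to $\Sigma$ along $\partial\Sigma$ and $w=\bar g(V,\bar N)=0$ there. Thus $w$ is an admissible Dirichlet test function, and multiplying the Jacobi equation by $w$ and integrating by parts (the boundary term dropping out) gives the energy identity $\int_\Sigma|\nabla_\Sigma w|^2=\int_\Sigma\big(|A|^2+Q\big)w^2$.

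The heart of the matter is then a sharp functional inequality on $\Sigma$. Using the Hessian/Laplacian comparison for $\bar r$ available in the Hadamard space (which is where conditions $i)$ and $ii)$ are decisive, controlling both $\Delta_\Sigma\bar r$ and the sign of $Q$), together with the confinement $\bar r\le\bar r_0$ on $\Sigma$, I would establish $\int_\Sigma|\nabla_\Sigma w|^2\ge\frac{n^2}{4\bar r_0^2}\int_\Sigma w^2$ for every function vanishing on $\partial\Sigma$. Feeding this and the hypothesis $|A|^2\le\frac{n^2}{4\bar r_0^2}$ into the energy identity forces the whole chain of inequalities to be equalities. The anticipated hard point is exactly this step: a naive Hardy or McKean comparison built from $\bar r$ alone yields only the weaker constants $\frac{(n-1)^2}{4\bar r_0^2}$ or $\frac{(n-2)^2}{4\bar r^2}$, so reaching the sharp $\frac{n^2}{4\bar r_0^2}$ requires genuinely exploiting the free-boundary sphere and the favourable sign of $Q$ (equivalently, choosing the radial potential $F$ optimally against the warping $\phi$), rather than a generic distance estimate.

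Finally I would run the equality analysis. Equality in the functional inequality together with $|A|^2\equiv\frac{n^2}{4\bar r_0^2}$ must first rule out a nontrivial rigid configuration, using the trace-free relation $w^2|A|^2=|\mathrm{Hess}_\Sigma F|^2-n\phi'^2$ and a Simons-type/Bochner identity (mirroring the exclusion of the $g=0,\,r=2$ case in the surface discussion of the Introduction), and therefore forces $w\equiv 0$ on $\Sigma$. But $w=\bar g(V,\bar N)\equiv 0$ means the radial field $V$ is everywhere tangent to $\Sigma$, so $\bar r|_\Sigma$ has unit gradient and the radial geodesics through $o$ lie in $\Sigma$; hence $\Sigma$ is a geodesic cone over $\partial\Sigma$ with vertex $o$. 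Smoothness at the vertex together with minimality then upgrades the cone to a totally geodesic disk $D^n$ through the center, which is the assertion.
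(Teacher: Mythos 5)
Your proposal follows essentially the same route as the paper: the conformal support function $w=\bar{g}(\vec{x},\bar{N})$ satisfies $\Delta_\Sigma w+(|A|^2+Q)w=0$ with $Q\le 0$ thanks to conditions $i)$--$ii)$ and $w=0$ on $\partial\Sigma$ by the free-boundary condition, and the sharp Dirichlet inequality $\int_\Sigma|\nabla_\Sigma w|^2\ge \frac{n^2}{4\bar{r}_0^2}\int_\Sigma w^2$ combined with $|A|^2\le \frac{n^2}{4\bar{r}_0^2}$ forces $w\equiv 0$, whence $\Sigma$ is a Euclidean-minimal cone through the origin and hence a flat disk. The one ingredient you assert but do not prove --- that sharp inequality together with its rigidity (equality only for $w\equiv 0$) --- is precisely the Hardy inequality of Batista--Mirandola--Vit\'orio for minimal submanifolds of Hadamard spaces with $\gamma=0$, $p=2$, which the paper invokes; it comes from the elementary comparison $\Delta_\Sigma\bar{r}^2\ge 2n$ rather than from the free-boundary geometry or the sign of $Q$ as you suggest, and its equality clause replaces the Simons-type analysis you sketch at the end.
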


\begin{theorem}\label{main3} Let $\Sigma^n \hookrightarrow B_{\bar{r}_0} \subset B_{\bar{R}}$ be a compact free boundary minimal hypersurface such that
\begin{equation*}
\left|A\right|^2 \leq \frac{(n-2)^2}{4\bar{r}^{2}} \quad \mbox{in} \quad \Sigma \setminus \vec{0}\,.
\end{equation*}
Then $\Sigma$ is a totally geodesic disk $D^n$ passing through the center of the ball.
\end{theorem}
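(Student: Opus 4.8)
The plan is to run, in the conformal metric $\bar g$, the same argument that proves the second inequality of Theorem~\ref{main1}, replacing the Euclidean position field by the conformal radial field and the Euclidean Hardy inequality by a sharp Hardy inequality adapted to $\bar r$. Write $\rho=\tfrac12\bar r^{2}$ and let $\bar\nu$ be the unit normal of $\Sigma$ for $\bar g$. The key object is the conformal support function
\[
w=\langle\bar\nabla\rho,\bar\nu\rangle=\bar r\,\langle\bar\nabla\bar r,\bar\nu\rangle .
\]
Since $\partial\Sigma\subset\partial B_{\bar r_0}$ is a geodesic sphere of $\bar g$ and the free boundary condition forces $\Sigma$ to meet it orthogonally, $\bar\nu$ is tangent to $\partial B_{\bar r_0}$ along $\partial\Sigma$; hence $\langle\bar\nabla\bar r,\bar\nu\rangle=0$ and $w=0$ on $\partial\Sigma$. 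This Dirichlet condition is what makes every boundary term below disappear.

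First I would derive the elliptic identity satisfied by $w$. The radial field $\bar\nabla\rho$ generates the dilations of $B_{\bar R}$, which are conformal (but not isometric) for $\bar g$; writing out the second variation of area for the associated normal deformation and using minimality of $\Sigma$, one obtains an equation of Jacobi type
\[
\bar\Delta_\Sigma w+\bigl(|A|^{2}+\overline{\mathrm{Ric}}(\bar\nu,\bar\nu)\bigr)w=E,
\]
where $E$ collects the correction terms produced by the conformal (non-Killing) nature of the field, expressed through $u'$ and $u''$. Multiplying by $w$, integrating, and discarding the boundary term (as $w|_{\partial\Sigma}=0$) gives
\[
\int_\Sigma|\bar\nabla w|^{2}=\int_\Sigma\bigl(|A|^{2}+\overline{\mathrm{Ric}}(\bar\nu,\bar\nu)\bigr)w^{2}-\int_\Sigma Ew .
\]
Here the Hadamard hypothesis enters: conditions $i)$ and $ii)$ force $\overline{\mathrm{Ric}}(\bar\nu,\bar\nu)\le 0$ and the right sign for $\int_\Sigma Ew$, so that $\int_\Sigma|\bar\nabla w|^{2}\le\int_\Sigma|A|^{2}w^{2}$.

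Next I would establish the sharp conformal Hardy inequality. Using the vector field $\bar X=-\tfrac{n-2}{2}\,\bar r^{-1}\bar\nabla_\Sigma\bar r$ and completing the square,
\[
\int_\Sigma|\bar\nabla w|^{2}=\int_\Sigma|\bar\nabla w-w\bar X|^{2}-\int_\Sigma w^{2}\bigl(\bar{\mathrm{div}}_\Sigma\bar X+|\bar X|^{2}\bigr),
\]
again with no boundary term because $w|_{\partial\Sigma}=0$. Setting $s=|\bar\nabla_\Sigma\bar r|^{2}\le1$ and using minimality of $\Sigma$, one computes $\bar{\mathrm{div}}_\Sigma\bar X+|\bar X|^{2}$ in terms of $\bar\Delta_\Sigma\bar r$. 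The convexity of $\bar r$ guaranteed by condition $ii)$ (nonpositive radial curvature) yields the Laplacian comparison $\bar\Delta_\Sigma\bar r\ge(n-s)\bar r^{-1}$, whence
\[
\bar{\mathrm{div}}_\Sigma\bar X+|\bar X|^{2}\le-\frac{(n-2)^{2}}{4\bar r^{2}},
\]
with equality exactly when $s=1$. Therefore $\displaystyle\int_\Sigma|\bar\nabla w|^{2}\ge\frac{(n-2)^{2}}{4}\int_\Sigma\frac{w^{2}}{\bar r^{2}}$.

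Combining the two inequalities with the hypothesis $|A|^{2}\le\frac{(n-2)^{2}}{4\bar r^{2}}$ gives
\[
\frac{(n-2)^{2}}{4}\int_\Sigma\frac{w^{2}}{\bar r^{2}}\le\int_\Sigma|\bar\nabla w|^{2}\le\int_\Sigma|A|^{2}w^{2}\le\frac{(n-2)^{2}}{4}\int_\Sigma\frac{w^{2}}{\bar r^{2}} ,
\]
so equality holds throughout; in particular the comparison above is saturated and $s\equiv1$ on $\{w\neq0\}$. But $s=1$ means $\langle\bar\nabla\bar r,\bar\nu\rangle=0$, i.e. $w=0$, a contradiction. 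Hence $w\equiv0$, the radial field $\bar\nabla\rho$ is everywhere tangent to $\Sigma$, so $\Sigma$ is invariant under the dilation flow, i.e. a cone over $\partial\Sigma$ through the center; smoothness at the center forces $\Sigma$ to be a totally geodesic disk $D^{n}$ through the center, as claimed. The main obstacle I expect is the first step: producing the identity for $w$ with explicitly controlled error $E$. Because dilations preserve minimality only in the flat case, $w$ is a genuine Jacobi field only there, and in the conformal setting one must compute the deviation of $\bar\nabla^{2}\rho$ from $\bar g$ and of $\overline{\mathrm{Ric}}$ from $0$ and verify that conditions $i)$ and $ii)$ are exactly what make $\int_\Sigma(\overline{\mathrm{Ric}}(\bar\nu,\bar\nu)w^{2}-Ew)\le0$. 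The sharp Hardy step is comparatively routine once the comparison $\bar\Delta_\Sigma\bar r\ge(n-s)\bar r^{-1}$ is in hand, and the rigidity is immediate from the equality analysis.
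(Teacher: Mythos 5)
Your overall skeleton is the same as the paper's: a radial support function vanishing on $\partial\Sigma$ by the free boundary condition, a Jacobi-type equation for it, a Hardy inequality on the minimal hypersurface to show the associated quadratic form is positive definite, hence the support function vanishes, hence $\Sigma$ is a piece of a cone and, being smooth through the origin, a totally geodesic disk. Two execution differences are worth noting. First, the paper does not re-derive the Hardy inequality: it quotes the Batista--Mirandola--Vit\'orio inequality (their Theorem 2.1, used here with $p=2$, $\gamma=2$) together with its equality case, and runs the positivity argument on the first Dirichlet eigenfunction of $L=\Delta+|A|^2+q$ rather than directly on the support function; your direct completing-the-square argument with $\bar X=-\tfrac{n-2}{2}\bar r^{-1}\bar\nabla_\Sigma\bar r$ and the comparison $\bar\Delta_\Sigma\bar r\ge (n-s)\bar r^{-1}$ is a correct reconstruction of that inequality (your algebra for $\mathrm{div}_\Sigma\bar X+|\bar X|^2\le -\tfrac{(n-2)^2}{4\bar r^2}$ checks out, with equality exactly at $s=1$), but it needs a cutoff near $\bar r=0$ since $\bar X$ is singular there and the theorem allows $\Sigma$ to pass through the origin.

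The one genuine gap is the step you yourself flag: the identity $\bar\Delta_\Sigma w+(|A|^2+\overline{\mathrm{Ric}}(\bar\nu,\bar\nu))w=E$ with a sign-controlled $E$. Your field $\bar\nabla\rho=\bar r\,\bar\nabla\bar r$ is \emph{not} conformal for $\bar g$: writing $\bar g=d\bar r^2+\phi(\bar r)^2 g_{\mathbb S^n}$ with $\phi=re^{u(r^2)}$, the conformal radial field is $\phi\,\partial_{\bar r}=\vec{x}$, not $\bar r\,\partial_{\bar r}$. This is exactly the choice the paper makes: for $v=\bar g(\vec{x},\bar N)$ the Caminha--Lima identity gives $\Delta v+|A|^2v+\overline{\mathrm{Ric}}(\bar N,\bar N)v=-n\bar N(\sigma)$ with $\sigma=1+2u'(|x|^2)|x|^2$, and the crucial point is that $\bar N(\sigma)=4e^{-2h}\bigl[u''(|x|^2)|x|^2+u'(|x|^2)\bigr]\bar g(\vec x,\bar N)$ is itself a multiple of $v$, so the error is absorbed into a potential $q=\overline{\mathrm{Ric}}(\bar N,\bar N)+4ne^{-2h}[u''|x|^2+u']$, and condition $i)$ (via the paper's Ricci estimate) gives $q\le 0$. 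With your $w=(\bar r/\phi)\,v$ the equation acquires first-order terms in $\nabla_\Sigma(\bar r/\phi)$ whose contribution to $\int_\Sigma Ew$ has no evident sign, so "conditions $i)$ and $ii)$ force the right sign for $\int_\Sigma Ew$" is not established as stated. The repair is simply to replace $\bar\nabla\rho$ by $\vec{x}$; since the two fields differ by a positive radial factor, the rest of your argument (vanishing of the support function, the cone conclusion, and the smoothness-at-the-vertex rigidity) goes through unchanged.
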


We consider now free boundary minimal hypersurfaces immersed in a Euclidean annular domain.
For $r_{_0} \leq \infty$, consider $B_{\bar{r}_{_0}}=(\B_{r_{_0}}^{n+1},\bar{g})$, where $\bar{g}=e^{2h}\left\langle ,\right\rangle$, with $h(x)=u(\left|x\right|^2)$ and $u$ satisfying the conditions $i)$ and $ii)$. For $r_1<r_2<r_{_0}$, define the $(n+1)-$dimensional annulus $\mathcal{A}(\bar{r}_1,\bar{r}_2):= B_{\bar{r}_2} \setminus B_{\bar{r}_1}$. When we want to emphasise that the metric $\bar{g}$ coincides with the canonical metric (that is, for $u\equiv0$) we write $\mathcal{A}(r_1,r_2)$ instead of $\anelconf$.

\begin{theorem} Let $\Sigma^n \hookrightarrow \mathcal{A}(r_1,r_2)$ be a free boundary minimal hypersurface in immersed a Euclidian $(n+1)$-dimensional annulus. Assume that

\begin{equation*}
\left|A\right|^2 \leq \frac{n^2}{4r_{_2}^{2}}\,.
\end{equation*}
Then, 

$1)$ If $n = 2$, $\Sigma^n \hookrightarrow \mathcal{A}(r_1,r_2)$ is a totally geodesic annulus.

$2)$ If $n\geq3$ and $r_{_1}^2 < \frac{4(n-1)}{n^2}\,r_{_2}^2$, $\Sigma^n \hookrightarrow \mathcal{A}(r_1,r_2)$ is a totally geodesic annulus. 

$3)$ If $n\geq3$ and $r_{_1}^2 = \frac{4(n-1)}{n^2}\,r_{_2}^2$, either $\Sigma^n \hookrightarrow \mathcal{A}(r_1,r_2)$ is a totally geodesic annulus or $\Sigma^n$ is a trunk of a cone whose support cone is taking on the Clifford Torus in  $\S^n$.
\end{theorem}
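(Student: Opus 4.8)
The plan is to test the rigidity against the support function $f=\langle x,N\rangle$, where $x$ is the position vector and $N$ a unit normal along $\Sigma$. Since dilations $x\mapsto\lambda x$ preserve minimality, the normal component of the dilation field is a Jacobi field, so $f$ satisfies $\Delta_\Sigma f=-|A|^2 f$. Moreover, the free-boundary condition means $\Sigma$ meets each sphere $\{|x|=r_i\}$ orthogonally, so $N$ is tangent to that sphere and hence $\langle x,N\rangle=0$ on all of $\partial\Sigma$ (both components). Thus $f$ vanishes on $\partial\Sigma$, and multiplying the Jacobi equation by $f$ and integrating by parts gives
\begin{equation*}
\int_\Sigma |\nabla f|^2\,da=\int_\Sigma |A|^2 f^2\,da.
\end{equation*}

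First I would prove a Poincaré-type inequality $\int_\Sigma|\nabla f|^2\,da\ge\frac{n^2}{4r_2^2}\int_\Sigma f^2\,da$ for every $f$ vanishing on $\partial\Sigma$, in the same spirit as the preceding ball theorem. The only input is minimality: the tangential position field $x^T$ satisfies $\mathrm{div}_\Sigma\, x^T=n$, so writing $n\int f^2=\int f^2\,\mathrm{div}_\Sigma\, x^T=-2\int f\langle\nabla f,x^T\rangle$ (the boundary term drops since $f|_{\partial\Sigma}=0$) and applying $|x^T|\le|x|\le r_2$ together with Cauchy--Schwarz yields $n\int f^2\le 2r_2(\int|\nabla f|^2)^{1/2}(\int f^2)^{1/2}$, which is the asserted inequality. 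Feeding in the hypothesis $|A|^2\le\frac{n^2}{4r_2^2}$ forces the chain
\begin{equation*}
\int_\Sigma|\nabla f|^2=\int_\Sigma|A|^2 f^2\le\frac{n^2}{4r_2^2}\int_\Sigma f^2\le\int_\Sigma|\nabla f|^2
\end{equation*}
to be an equality throughout. Because $|x|<r_2$ strictly in the interior of the annulus, equality in the step $|x^T|\le|x|\le r_2$ forces $f\,\nabla f\equiv0$ a.e.; since $f=0$ on $\partial\Sigma$ this gives $f\equiv0$, i.e. $\langle x,N\rangle\equiv0$ everywhere on $\Sigma$.

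The vanishing of $\langle x,N\rangle$ means the position field is everywhere tangent, so $\Sigma$ is invariant under dilations and is therefore the truncation to the annulus of a minimal cone $C(M)$ over a closed minimal hypersurface $M^{n-1}\subset\mathbb{S}^n$. For such a cone one computes $|A_\Sigma|^2=|A_M|^2/|x|^2$, so the hypothesis, evaluated at the inner radius $|x|=r_1$ where it is sharpest, becomes $|A_M|^2\le\frac{n^2 r_1^2}{4r_2^2}$ on $M$. I would then invoke the gap theorem of Simons and Chern--do Carmo--Kobayashi for compact minimal hypersurfaces of $\mathbb{S}^n$: if $|A_M|^2\le n-1$, then either $M$ is totally geodesic or $|A_M|^2\equiv n-1$ and $M$ is a Clifford minimal hypersurface. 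When $r_1^2<\frac{4(n-1)}{n^2}r_2^2$ (which holds automatically for $n=2$, since then $4(n-1)/n^2=1$) the bound reads $|A_M|^2<n-1$, forcing $M$ totally geodesic and $\Sigma$ a totally geodesic annulus; when $r_1^2=\frac{4(n-1)}{n^2}r_2^2$ the bound reads $|A_M|^2\le n-1$, admitting in addition the Clifford case, i.e. a cone over the Clifford torus. This reproduces the three alternatives.

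The step I expect to cause the most trouble is the equality analysis that yields $f\equiv0$: one must argue carefully that equality in $|x^T|\le|x|\le r_2$ cannot persist on a set of positive measure in the interior, where $|x|<r_2$, and then promote the a.e. conclusion $f\,\nabla f\equiv0$ to $f\equiv0$ rather than merely $f$ locally constant. A secondary delicate point is the implication $\langle x,N\rangle\equiv0\Rightarrow\Sigma$ is a cone and the exact identity $|A_\Sigma|^2=|A_M|^2/|x|^2$, since the matching of the threshold $\frac{4(n-1)}{n^2}$ with the Simons value $n-1$ depends on this constant being precisely right.
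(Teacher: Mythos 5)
Your argument is correct, and its skeleton is the same as the paper's: show the support function $f=\langle x,N\rangle$ vanishes identically, deduce that $\Sigma$ is the truncation of a minimal cone over a closed minimal hypersurface $M^{n-1}\subset\mathbb{S}^n$, use the scaling identity $|A_\Sigma|^2(q)=|A_M|^2(q/|q|)\,/\,|q|^2$ (this is exactly the content of the paper's two lemmas on the second fundamental form of cones, so the constant you worry about is right), and finish with Chern--do Carmo--Kobayashi, the threshold $r_1^2\lessgtr\frac{4(n-1)}{n^2}r_2^2$ arising precisely as you describe. The genuine difference is in how the vanishing of $f$ is obtained. The paper does not integrate the Jacobi equation against $f$ directly: it proves that the first Dirichlet eigenvalue of $L=\Delta+|A|^2$ is strictly positive by feeding the positive first eigenfunction into the Hardy-type inequality of Batista--Mirandola--Vit\'orio (imported as a black box together with its rigidity clause ``equality only for $\psi\equiv0$''), and then concludes $f\equiv 0$ from $Lf=0$, $f|_{\partial\Sigma}=0$. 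You instead derive the needed Poincar\'e inequality $\frac{n^2}{4r_2^2}\int_\Sigma f^2\le\int_\Sigma|\nabla f|^2$ from scratch via $\mathrm{div}_\Sigma(x^T)=n$; this is exactly the Euclidean, $\gamma=0$, $p=2$ case of the cited inequality, so your route is more elementary and self-contained but buys no extra generality. The equality analysis you flag as delicate does close: equality forces $\int_\Sigma|f|\,|\nabla f|\,(r_2-|x|)\,da=0$, and since $\{|x|=r_2\}\cap\Sigma$ has $n$-dimensional measure zero this gives $\nabla(f^2)\equiv 0$, so $f^2$ is constant on each component; every component of a compact minimal hypersurface in $\mathbb{R}^{n+1}$ has nonempty boundary (no closed compact minimal hypersurfaces exist there) and $f$ vanishes on $\partial\Sigma$ by the free boundary condition, hence $f\equiv 0$. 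One small point worth making explicit, which the paper does address: once $x$ is everywhere tangent, the radial flow lines in $\Sigma$ can only terminate on $\partial\mathcal{A}(r_1,r_2)$, so $\partial\Sigma$ must meet both spheres and $\Sigma$ is the full truncated cone from $r_1$ to $r_2$, which is what lets you evaluate the hypothesis at the inner radius.
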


Now we will study the case of a free boundary minimal hypersurface immersed in $\anelconf$, where the annulus $\mathcal{A}$ is not necessarily Euclidean. When $\Sigma^n$ is equipped with the Euclidian geometry of $\B^{n+1}_{r_{_2}} \setminus \B^{n+1}_{r_{_1}}$ we write $\Sigma^{n}_{\delta}$, and the second fundamental form will be denoted by $A_{\delta}$. If $p \in \Sigma^n$ has Euclidian distance to origin given by $r=\left|p\right|$, then the distance of $p$ to the origin with respect to the metric $\bar{g}=e^{2u(\left|x\right|^2)}\left\langle,  \right\rangle$ is given by the relation
\begin{equation*}
\bar{r}=rI(r) 
\end{equation*}
where $I(r)=\displaystyle \int_{0}^{1} e^{u(t^2r^2)} dt$. For our purposes, we define 
\[
m_0=\sup\{e^{2u(\left|x\right|^2)}; x \in \aneleu \}\,.
\]
\begin{theorem} Let $\Sigma^n \hookrightarrow \anelconf$ be a free boundary minimal hypersurface in a $(n+1)$-dimensional annulus conformal to the Euclidian annulus $\aneleu$. Assume that
\begin{equation*}
\left|A\right|^2 \leq \frac{n^2}{4\bar{r}_{_2}^{2}}\,.
\end{equation*}
Then, 

$\textbf{1})$ If $\frac{n^2}{(n-1)} \leq 4\left(\dfrac{I(r_{_2})^2}{m_0} \right)$, $\Sigma^n \hookrightarrow \anelconf$ is a totally geodesic annulus.\\ 

$\textbf{2})$ If $ \frac{n^2}{(n-1)} > 4\left(\dfrac{I(r_{_2})^2}{m_0} \right)$ and $r_{_1}^2 < \dfrac{4(n-1)}{n^2}\left( \dfrac{I(r_{_2})^2}{m_0}\right)\,r_{_2}^2$, $\Sigma^n \hookrightarrow \anelconf$ is a totally geodesic annulus.\\

$\textbf{3})$ If $\frac{n^2}{(n-1)} > 4\left(\dfrac{I(r_{_2})^2}{m_0} \right)$ and $r_{_1}^2 = \dfrac{4(n-1)}{n^2}\left( \dfrac{I(r_{_2})^2}{m_0}\right)\,r_{_2}^2$, either $\Sigma^n \hookrightarrow \anelconf$ is a totally geodesic annulus or $\Sigma^n$ is a trunk of a cone whose support cone is taking on a Clifford torus in $\S^n$ when it is being considered as $\Sigma^n \hookrightarrow \aneleu$
\end{theorem}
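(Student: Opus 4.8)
The plan is to mirror the two-stage argument behind the Euclidean annulus theorem, adapted to the conformal metric $\bar g=e^{2u(|x|^2)}\langle\,,\rangle$. First I would show that the curvature hypothesis forces $\Sigma$ to be a minimal cone over a hypersurface $L^{n-1}\subset\mathbb{S}^n$; then I would classify $L$ by the Chern--do Carmo--Kobayashi pinching theorem, reading the trichotomy off the bound evaluated at the inner radius. The conformal data intervene only through $\bar r=rI(r)$ and $m_0=\sup e^{2u(|x|^2)}$.

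\emph{Stage 1 (reduction to a cone).} Let $\bar w=\bar g(\partial_{\bar r},\bar N)$ be the normal component of the $\bar g$-radial field, the analogue of the support function $\langle x,N\rangle$; since the free-boundary condition only records orthogonality it is conformally invariant, and $\bar w=0$ on both boundary components. I would use two facts. (a) Integrating the Jacobi-type relation for $\bar w$ against $\bar w$ and invoking the nonpositivity of the ambient Ricci curvature together with the control on the deformation of the radial field---both consequences of conditions $i)$ and $ii)$, which make $B_{\bar R}$ a Hadamard space---gives $\int_\Sigma|\bar\nabla\bar w|^2\leq\int_\Sigma|A|^2\bar w^2$. (b) The Hessian comparison in nonpositive curvature yields $\bar\Delta_\Sigma\bar r^{2}\geq 2n$ for the minimal $\Sigma$ (with equality only along radial cones); combined with $\bar r\leq\bar r_2$ and $|\bar\nabla\bar r|\leq 1$, a Cauchy--Schwarz estimate of $\int_\Sigma\bar w\,\bar g(\bar\nabla\bar w,\bar\nabla\bar r^{2})$ produces the sharp Poincar\'e inequality $\int_\Sigma|\bar\nabla\bar w|^2\geq\frac{n^2}{4\bar r_2^2}\int_\Sigma\bar w^2$. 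Chaining these against $|A|^2\leq\frac{n^2}{4\bar r_2^2}$ forces every inequality to be an equality; since equality in Cauchy--Schwarz would require $\bar r\equiv\bar r_2$ on $\{\bar w\neq 0\}$, which is impossible, we get $\bar w\equiv 0$. Thus the radial field is tangent to $\Sigma$, so $\Sigma$ is a minimal cone over some $L^{n-1}\subset\mathbb{S}^n$; moreover $\bar w\equiv 0$ makes the Euclidean mean curvature $H_\delta=-2nu'\langle x,N\rangle$ vanish, so $\Sigma$ is simultaneously a Euclidean minimal cone and $L$ is minimal in $\mathbb{S}^n$.

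\emph{Stage 2 (the link).} On the Euclidean cone $|A_\delta|^2=|A_L|^2/\rho^2$ with $\rho=|x|$, and the conformal covariance of the trace-free second fundamental form (with $H_\delta=0$) gives $|A|^2=e^{-2u}|A_L|^2/\rho^2$. Evaluating the hypothesis at $\rho=r_1$ and using $e^{2u}\leq m_0$, $\bar r_2=r_2I(r_2)$ yields $|A_L|^2\leq\frac{n^2 m_0\,r_1^2}{4r_2^2 I(r_2)^2}$. Since the pinching threshold for a minimal $L^{n-1}\subset\mathbb{S}^n$ is $|A_L|^2\leq n-1$, the condition $\frac{n^2 m_0 r_1^2}{4r_2^2 I(r_2)^2}\leq n-1$ is exactly $r_1^2\leq\frac{4(n-1)}{n^2}\frac{I(r_2)^2}{m_0}r_2^2$, and case $\textbf{1}$ ($\frac{n^2}{n-1}\leq 4\frac{I(r_2)^2}{m_0}$) makes it hold for every $r_1<r_2$. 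In cases $\textbf{1}$ and $\textbf{2}$ the inequality is strict, so Chern--do Carmo--Kobayashi forces $|A_L|^2\equiv 0$ and $\Sigma$ is a totally geodesic annulus; in case $\textbf{3}$ one has $|A_L|^2\leq n-1$ with equality attained, so $L$ is either totally geodesic or a Clifford torus, which is the exceptional cone.

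\emph{Main obstacle.} The crux is Stage 1 in the conformal metric: deriving the exact Jacobi-type identity for $\bar w$ and the comparison $\bar\Delta_\Sigma\bar r^{2}\geq 2n$, and checking that every correction term carries the favorable sign. This is precisely where conditions $i)$ and $ii)$ must be used in full, and tracking them also pins down the appearance of $m_0$ and $I(r_2)$ in the thresholds; one must in particular verify that the strictness of the Hadamard comparison is what excludes equality and forces $\bar w\equiv 0$.
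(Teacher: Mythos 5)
Your proposal follows essentially the same route as the paper: the support function of the radial field is shown to vanish via its Jacobi-type equation combined with the sharp Hardy--Poincar\'e inequality on minimal submanifolds of Hadamard spaces (which the paper cites from Mirandola--Batista--Vit\'orio rather than re-deriving, and packages as positivity of the first eigenvalue of $\Delta+|A|^2+q$), after which $\Sigma$ is a Euclidean minimal cone trunk, and the conformal factor bounds $e^{2u}\leq m_0$ and $\bar r_2=r_2I(r_2)$ convert the hypothesis, evaluated at the inner radius, into the Chern--do Carmo--Kobayashi pinching $|A_L|^2\leq n-1$, yielding the same trichotomy. The argument is correct and matches the paper's in all essentials.
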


\section{Preliminaries and Proofs of the Main Results}

The next theorem was the motivation for establish the results of this paper. We will state it in the context that interest us, i.e,  for minimal hypersurfaces with boundary. The general case can be found in \cite{MHF}. Recall that a manifold $\bar{M}$ is said be a Hadamard space if it is complete, simply-connected and has non positive sectional curvature. 
\begin{theorem}[Mirandola-Batista-Vit\'orio, \cite{MHF}] Let $\Sigma^n \hookrightarrow \bar{M}^{n+1}$ be a compact minimal hipersurface in a Hadamard Space $\bar{M}$. Let $\bar{r}=d_{\bar{M}}(\cdot,\xi)$ be the distance in $\bar{M}$ from a fixed point $\xi \in \bar{M}$. Consider $1 \leq p < \infty$ and $-\infty<\gamma<k$. Then, for all function $0 \leq \psi \in C^1(M)$ it holds, 
\begin{eqnarray}\label{MBV}
\frac{(n-\gamma)^2}{p^p}\int_{\Sigma} \frac{\psi^p}{\bar{r}^{\gamma}}+ \frac{\gamma(n-\gamma)^{p-1}}{p^{p-1}}\int_{\Sigma} \frac{\psi^p}{\bar{r}^{\gamma}}\left|\bar{\nabla}\bar{r}^{\perp}\right|^2 &\leq& \int_{\Sigma} \frac{\left|\nabla^{\Sigma} \psi \right|^p}{\bar{r}^{\gamma-p}} \nonumber \\
&+&  \frac{(n-\gamma)^{p-1}}{p^{p-1}}\int_{\partial \Sigma} \frac{\psi^p}{\bar{r}^{\gamma-1}}\left\langle \bar{\nabla} \bar{r},\nu\right\rangle
\end{eqnarray}
futhermore if $p>1$ then equality occur if and only if $\psi\equiv 0$ on $\Sigma$
\end{theorem}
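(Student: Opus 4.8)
The plan is to obtain \eqref{MBV} from two ingredients: a Laplacian comparison for the distance function $\bar r$ restricted to the minimal hypersurface $\Sigma$, and a single application of the divergence theorem on $\Sigma$ against a well-chosen radial vector field, closed off by Young's inequality tuned to produce the sharp constants. First I would record the geometric input. Since $\bar M$ is Hadamard, Cartan--Hadamard makes $\bar r=d_{\bar M}(\cdot,\xi)$ smooth on $\bar M\setminus\{\xi\}$ with $|\bar\nabla\bar r|=1$, and the Hessian comparison theorem (comparing $\sec\le 0$ against the flat model) gives $\mathrm{Hess}\,\bar r(V,V)\ge \frac{1}{\bar r}\left(|V|^2-\langle V,\bar\nabla\bar r\rangle^2\right)$. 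Tracing over an orthonormal frame $e_1,\dots,e_n$ of $T\Sigma$ and using that $\Sigma$ is minimal (so the mean-curvature term $\langle\bar\nabla\bar r,\vec H\rangle$ vanishes) yields the key inequality
\begin{equation*}
\Delta^\Sigma\bar r=\sum_{i=1}^n\mathrm{Hess}\,\bar r(e_i,e_i)\ge\frac{1}{\bar r}\left(n-|(\bar\nabla\bar r)^\top|^2\right)=\frac{1}{\bar r}\left(n-1+|\bar\nabla\bar r^\perp|^2\right),
\end{equation*}
since $|(\bar\nabla\bar r)^\top|^2=1-|\bar\nabla\bar r^\perp|^2$.

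Next I would apply the divergence theorem on $\Sigma$ to the tangential field $X=\frac{\psi^p}{\bar r^{\gamma-1}}\nabla^\Sigma\bar r$, where $\nabla^\Sigma\bar r=(\bar\nabla\bar r)^\top$. Expanding $\mathrm{div}_\Sigma X$, substituting the comparison above, and noting $\langle\nabla^\Sigma\bar r,\nu\rangle=\langle\bar\nabla\bar r,\nu\rangle$ on $\partial\Sigma$ (the conormal $\nu$ is tangent to $\Sigma$), I collect the terms proportional to $\psi^p/\bar r^\gamma$ and bound the cross term via Cauchy--Schwarz together with $|\nabla^\Sigma\bar r|\le 1$, i.e. $-\langle\nabla^\Sigma\psi,\nabla^\Sigma\bar r\rangle\le|\nabla^\Sigma\psi|$, to arrive at
\begin{equation*}
(n-\gamma)\int_\Sigma\frac{\psi^p}{\bar r^\gamma}+\gamma\int_\Sigma\frac{\psi^p}{\bar r^\gamma}|\bar\nabla\bar r^\perp|^2\le\int_{\partial\Sigma}\frac{\psi^p}{\bar r^{\gamma-1}}\langle\bar\nabla\bar r,\nu\rangle+p\int_\Sigma\frac{\psi^{p-1}|\nabla^\Sigma\psi|}{\bar r^{\gamma-1}}.
\end{equation*}
Here the standing hypothesis $\gamma<n$ (reading $k=n$) is exactly what keeps the coefficient $n-\gamma$ of the first integral positive.

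The final step is to split the last integrand and apply Young's inequality $p\,ab\le(p-1)\delta^{p/(p-1)}a^{p/(p-1)}+\delta^{-p}b^p$ with $a=\psi^{p-1}/\bar r^{\gamma(p-1)/p}$ and $b=|\nabla^\Sigma\psi|/\bar r^{(\gamma-p)/p}$, so that $a^{p/(p-1)}=\psi^p/\bar r^\gamma$ and $b^p=|\nabla^\Sigma\psi|^p/\bar r^{\gamma-p}$. The sharp choice $\delta=\left(\frac{n-\gamma}{p}\right)^{(p-1)/p}$ gives $\delta^{p/(p-1)}=\frac{n-\gamma}{p}$ and hence $(n-\gamma)-(p-1)\delta^{p/(p-1)}=\frac{n-\gamma}{p}$; multiplying through by $\delta^p=\frac{(n-\gamma)^{p-1}}{p^{p-1}}$ simultaneously normalizes the gradient term to coefficient $1$, the boundary term to $\frac{(n-\gamma)^{p-1}}{p^{p-1}}$, the $|\bar\nabla\bar r^\perp|^2$ term to $\frac{\gamma(n-\gamma)^{p-1}}{p^{p-1}}$, and the leading term to $\frac{(n-\gamma)^p}{p^p}$ (which equals the displayed $\frac{(n-\gamma)^2}{4}$ at $p=2$, the case driving Theorems \ref{main1}--\ref{main3}). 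For the equality statement when $p>1$, I would trace back the two inequalities used: equality in Young forces $|\nabla^\Sigma\psi|=c\,\psi/\bar r$ pointwise, while equality in Cauchy--Schwarz forces $|\bar\nabla\bar r^\perp|=0$ and $\nabla^\Sigma\psi=-|\nabla^\Sigma\psi|\,\nabla^\Sigma\bar r$; integrating the resulting ODE $\partial_{\bar r}\psi=-c\psi/\bar r$ along the gradient flow of $\bar r$ produces a profile $\psi\sim\bar r^{-c}$ incompatible with $\psi\in C^1$ on the compact $\Sigma$ unless $\psi\equiv0$.

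I expect the main obstacle to be twofold. The delicate bookkeeping is choosing $X$ and the exponent split so that the single parameter $\delta$ lands all four constants at once; the sharp value of $\delta$ is forced, and attaining the optimal leading constant rather than a suboptimal one is the crux of the estimate. The second, more technical, obstacle is the possible singularity of $\bar r$ at $\xi$: if $\xi\in\Sigma$ one must excise a small geodesic ball $B_\varepsilon(\xi)$, run the argument on $\Sigma\setminus B_\varepsilon(\xi)$, and verify that the extra inner-boundary integral vanishes as $\varepsilon\to0$ in the admissible range of $\gamma$, so that only the integral over $\partial\Sigma$ survives.
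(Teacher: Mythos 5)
The paper itself contains no proof of this statement---it is imported from \cite{MHF} and used as a black box---so the comparison is really with the argument in that reference, and your proposal reconstructs essentially that argument: Hessian comparison for $\bar r$ against the flat model, traced over $T\Sigma$ with the mean-curvature term killed by minimality, the divergence theorem applied to $X=\psi^p\,\bar r^{1-\gamma}\,\nabla^{\Sigma}\bar r$, and Young's inequality with the single weight $\delta=\bigl(\tfrac{n-\gamma}{p}\bigr)^{(p-1)/p}$ landing all four constants at once. Your computation is correct, and it usefully exposes a transcription error in the statement as quoted here: for general $p$ the leading constant is $\tfrac{(n-\gamma)^p}{p^p}$, not $\tfrac{(n-\gamma)^2}{p^p}$ (the two coincide at $p=2$, the only case the paper invokes, where the inequality reduces to (2.2)); likewise the undefined $k$ in the hypothesis $\gamma<k$ must be read as $n$, as you do.

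The one place your sketch is too quick is the equality case for $p>1$, which is not cosmetic: the paper's Lemma 3.1 relies on it to force $v_1\equiv 0$. Equality gives $|\nabla^{\Sigma}\psi|=\tfrac{n-\gamma}{p}\,\psi/\bar r$ with $\nabla^{\Sigma}\psi$ antiparallel to $\nabla^{\Sigma}\bar r$ and $|\nabla^{\Sigma}\bar r|=1$ on $\{\psi>0\}$, equivalently $\psi\,\bar r^{(n-\gamma)/p}$ locally constant there; but if $\xi\notin\Sigma$ the profile $\psi=C\,\bar r^{-(n-\gamma)/p}$ is a perfectly good $C^1$ function on the compact $\Sigma$, so "incompatible with $\psi\in C^1$" is not by itself a contradiction. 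One must additionally exploit that equality forces $\bar\nabla\bar r$ to be everywhere tangent to $\Sigma$ on $\{\psi>0\}$ (and equality in the Hessian comparison), and then derive the contradiction from an interior critical point of $\psi$ or of $\bar r|_{\Sigma}$ --- e.g.\ a positive interior maximum of $\psi$ would have vanishing gradient while the equality condition demands $|\nabla^{\Sigma}\psi|>0$ there, with a separate argument when the maximum sits only on $\partial\Sigma$. Your point about excising a small ball about $\xi$ when $\xi\in\Sigma$ and $\gamma>0$ is correct and is indeed required for the boundary term at the excision to vanish.
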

Fo the particular case where $0 \leq \gamma < n$ and $p=2$, the inequality $(\ref{MBV})$ becomes
\begin{equation}\label{MBV2}
\frac{(n-\gamma)^2}{4}\int_{\Sigma} \frac{\psi^2}{\bar{r}^{\gamma}} \leq \int_{\Sigma} \left|\nabla^{\Sigma} \psi \right|^2 \bar{r}^{(2-\gamma)} + \frac{(n-\gamma)}{2}\int_{\partial \Sigma} \frac{\psi^2}{\bar{r}^{\gamma-1}}\left\langle \bar{\nabla} \bar{r},\nu\right\rangle
\end{equation}
Check also that inequality for the choice $\gamma=0$.


We will build a model of Hadamard space where we can apply the above theorem to obtain classification results for minimal submanifolds under appropriate conditions.

Let $(\mathbb{B}_R^{n+1},\bar{g})$, $n\geq3$, be a Euclidean ball with radius $R$ and centred at origin and with a conformal metric $\bar{g}=e^{2h}\left\langle ,\right\rangle$, where $R<\infty$ or $R=\infty$ and $h(x)=u(\left|x\right|^2)$, $u:[0,R^2) \rightarrow \R$ being a smooth function. Note that if $r$ is the Euclidean distance from a point $x \in \mathbb{B}_R^{n+1}$ to origin than, the distance $\bar{r}$ from $x$ to the origin with respect to the conformal metric $\bar{g}$ is given by,
\begin{equation}\label{raioconfint}
\bar{r}=rI(r) 
\end{equation}
where $I(r)=\displaystyle \int_{0}^{1} e^{u(t^2r^2)} dt$. We write $B_{\bar{R}}$ to denote $(\mathbb{B}_R^{n+1},\bar{g})$.

\subsection{Basic relationship betwen geometries of the $(\mathbb{B}_R^{n+1},\bar{g})$ and $(\mathbb{B}_R^{n+1},\left\langle , \right\rangle)$}
We will always consider canonical cordinates on $\mathbb{B}_R^{n+1}$ and will denote by $\vec{x}$ the vector field which associate to each  point $x=(x_1,...,x_{n+1})$ the vector $\vec{x}=\sum x_i \partial_i$. Under conformal change on a riemannian metric, we have the following formulae, 

\begin{proposition}[Conformal change formulas]\label{conexaomudconf} Let $\nabla$ and $\bar{\nabla}$ be a Riemannian connection of $(M^{n+1},g)$ and $(M^{n+1},\bar{g})$ respectively, where $\bar{g}=e^{2h}g$ for some function $h:M \rightarrow \R$, $\in$ C$^{\infty}(M)$. Then, for smooth vector fields $X$ and $Y$ $\in \mathcal{X}(M)$ we have, 
\begin{flushleft}
i)$\bar{\nabla}_YX=\nabla_YX+ Y(h)X+X(h)Y - g(X,Y)\nabla h$  \\

ii)$\bar{R}(Y,Z)X=R(Y,Z)X + g(Y,X)\nabla_Z\nabla h - g(Z,X)\nabla_Y\nabla h$ \\
			    	 $\quad \quad \quad \quad \quad \quad \quad - \left\{(Hess\,h)(Z,X)-X(h)Z(h) + g(\nabla h,\nabla h)g(X,Z)  \right\}Y$\\
						 $\quad \quad \quad \quad \quad \quad \quad + \left\{ (Hess\,h)(Y,X) - Y(h)X(h) + g(\nabla h,\nabla h)g(Y,X)     \right\}Z$\\
						 $\quad \quad \quad \quad \quad \quad \quad + \left\{Y(h)g(Z,X) - Z(h)g(Y,X)\right\} \nabla h $\\

iii) $\bar{R}ic(Z,X)= Ric(Z,X) - (n-1)(Hess\,h)(Z,X) $ \\
						$\quad \quad \quad \quad \quad \quad \quad + (n-1)Z(h)X(h) - \Big\{\Delta h + (n-1)g(\nabla h,\nabla h)  \Big\} g(Z,X)$ \\

iv) $\bar{R}_{\bar{g}}=e^{-2h}\Big\{ R_g -2n\Delta h - n(n-1)g(\nabla h,\nabla h)  \Big\}$ 
\end{flushleft}
where, $\nabla h$, $(Hess\,h)$ and $\Delta h$ are calculated with respect to the metric $g$. 
\end{proposition}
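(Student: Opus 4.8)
The plan is to prove the four identities in order, so that (i) is the only one requiring a genuinely new computation, (ii) follows by inserting (i) into the definition of the curvature tensor, and (iii)--(iv) are obtained from (ii) by successive traces. Throughout I write $m=n+1$ for the dimension of $M$ and work in a local $g$-orthonormal frame $\{e_i\}$.

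First I would derive the connection formula (i) from the Koszul formula. Writing
\[
2\bar g(\bar\nabla_Y X,W)=Y\bar g(X,W)+X\bar g(Y,W)-W\bar g(X,Y)+\bar g([Y,X],W)-\bar g([Y,W],X)-\bar g([X,W],Y),
\]
I would substitute $\bar g=e^{2h}g$ and expand each of the three differentiated terms via $Y(e^{2h}g(X,W))=e^{2h}\big(2Y(h)\,g(X,W)+Y g(X,W)\big)$; the Lie-bracket terms carry the factor $e^{2h}$ without any derivative of $h$. Dividing through by $e^{2h}$, the six terms not involving $dh$ reassemble into the Koszul expression for $2g(\nabla_Y X,W)$, and what remains is $2Y(h)g(X,W)+2X(h)g(Y,W)-2W(h)g(X,Y)$. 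Rewriting $W(h)=g(\nabla h,W)$ and using nondegeneracy of $g$ then gives $\bar\nabla_Y X=\nabla_Y X+Y(h)X+X(h)Y-g(X,Y)\nabla h$, which is (i).

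Next, for (ii) I would insert (i) into $\bar R(Y,Z)X=\bar\nabla_Y\bar\nabla_Z X-\bar\nabla_Z\bar\nabla_Y X-\bar\nabla_{[Y,Z]}X$. To control the bookkeeping I would fix $p\in M$ and extend $X,Y,Z$ to fields whose $g$-covariant derivatives and mutual brackets vanish at $p$, which kills all $\nabla_Y Z$-type terms and the $[Y,Z]$ term at $p$ without changing the tensorial output. Applying $\bar\nabla_Y$ to $\bar\nabla_Z X=\nabla_Z X+Z(h)X+X(h)Z-g(X,Z)\nabla h$ and using (i) once more, the $Y$-derivatives landing on the scalar coefficients $Z(h),X(h),g(X,Z)$ generate Hessian terms $(\mathrm{Hess}\,h)(\cdot,\cdot)$ together with the second-covariant-derivative vectors $\nabla_Z\nabla h$ and $\nabla_Y\nabla h$, while the nested first-order correction terms of (i) generate the quadratic pieces $Y(h)Z(h)$ and $g(\nabla h,\nabla h)$. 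Antisymmetrizing in $Y,Z$ and regrouping yields (ii). This is the step I expect to be the main obstacle: it is not conceptually hard, but it produces roughly a dozen terms and the symmetric combinations must cancel exactly, so it is the most error-prone part of the argument.

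Finally, (iii) is the trace $\bar Ric(Z,X)=\sum_i g(\bar R(e_i,Z)X,e_i)$ of (ii): the term proportional to $Y$ in (ii) contracts to a factor equal to the dimension $m$, and combining it with the $Z$-proportional and $\nabla h$-proportional terms lowers the net coefficient to $m-2=n-1$, so that the Hessian term acquires coefficient $-(n-1)$ and the $Z(h)X(h)$ term coefficient $+(n-1)$, while the trace terms assemble into $-\{\Delta h+(n-1)g(\nabla h,\nabla h)\}g(Z,X)$, using $\sum_i g(\nabla_{e_i}\nabla h,e_i)=\Delta h$. Then (iv) is the $\bar g$-trace of (iii): since $\bar g^{-1}=e^{-2h}g^{-1}$, I have $\bar R_{\bar g}=e^{-2h}\sum_i \bar Ric(e_i,e_i)$, and contracting (iii) collapses $(\mathrm{Hess}\,h)(e_i,e_i)$ to $\Delta h$ and $e_i(h)e_i(h)$ to $g(\nabla h,\nabla h)$, producing the coefficient $2n=(n-1)+(n+1)$ on $\Delta h$ and $n(n-1)$ on $g(\nabla h,\nabla h)$, as stated in (iv). The only real care needed in these last two steps is tracking the dimension-dependent constants correctly, since $M$ has dimension $n+1$ rather than $n$.
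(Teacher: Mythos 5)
Your proposal is correct: the Koszul-formula derivation of (i), the difference-tensor computation in a frame normalized at a point for (ii), and the successive traces for (iii) and (iv) all check out, including the dimension-dependent coefficients $-(n-1)$, $+(n-1)$, $-2n$ and $-n(n-1)$ for $\dim M=n+1$. The paper itself states this proposition without proof, treating it as a standard collection of conformal-change formulas, so there is no authorial argument to compare against; your derivation is the standard one and fills that gap correctly.
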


\begin{lemma} Consider $\mathbb{B}_R^{n+1}$ with the Euclidean metric and denote by $\bar{\nabla}$ its connection. Then the Hessian of the function $h(x)=u(\left|x\right|^2)$ is given by
\[ 
(Hess\,h)(x)(Y,Z) = 4 u''(\left|x\right|^2)\left\langle \vec{x},Y\right\rangle \left\langle \vec{x},Z\right\rangle + 2u'(\left|x\right|^2)\left\langle Y,Z\right\rangle \,.
\]
\end{lemma}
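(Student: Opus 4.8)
The plan is to compute the Hessian directly in the standard Cartesian coordinates $x=(x_1,\dots,x_{n+1})$ on $\mathbb{B}_R^{n+1}$. The key simplification is that, for the Euclidean metric, the Levi-Civita connection $\bar{\nabla}$ is flat and the coordinate vector fields $\partial_i$ are parallel, so $\bar{\nabla}_{\partial_i}\partial_j=0$. Consequently the invariant formula $(Hess\,h)(Y,Z)=Y(Zh)-(\bar{\nabla}_Y Z)h$ reduces, on the coordinate frame, to the matrix of second partial derivatives:
\begin{equation*}
(Hess\,h)(\partial_i,\partial_j)=\partial_i\partial_j h\,.
\end{equation*}
Thus the whole computation is reduced to differentiating the composite function $h(x)=u(\left|x\right|^2)$ twice and then restoring the tensorial statement by bilinearity.

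First I would compute the gradient via the chain rule, noting $\partial_i\left|x\right|^2=2x_i$, which gives
\begin{equation*}
\partial_i h = 2u'(\left|x\right|^2)\,x_i\,,
\end{equation*}
so that $\bar{\nabla}h = 2u'(\left|x\right|^2)\vec{x}$. Differentiating once more with the product and chain rules yields
\begin{equation*}
\partial_j\partial_i h = 4u''(\left|x\right|^2)\,x_i x_j + 2u'(\left|x\right|^2)\,\delta_{ij}\,.
\end{equation*}
Here the first term comes from differentiating the scalar $u'(\left|x\right|^2)$ (producing the factor $2x_j\cdot u''$), and the second from differentiating the factor $x_i$ (producing $\delta_{ij}$).

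Finally I would pass back to arbitrary vector fields $Y=\sum_i Y_i\partial_i$ and $Z=\sum_j Z_j\partial_j$. By bilinearity of the Hessian,
\begin{equation*}
(Hess\,h)(Y,Z)=\sum_{i,j}\bigl(\partial_i\partial_j h\bigr)Y_i Z_j
= 4u''(\left|x\right|^2)\Bigl(\sum_i x_i Y_i\Bigr)\Bigl(\sum_j x_j Z_j\Bigr) + 2u'(\left|x\right|^2)\sum_i Y_i Z_i\,,
\end{equation*}
and recognizing the Euclidean inner products $\left\langle\vec{x},Y\right\rangle=\sum_i x_i Y_i$, $\left\langle\vec{x},Z\right\rangle=\sum_j x_j Z_j$, and $\left\langle Y,Z\right\rangle=\sum_i Y_i Z_i$ produces exactly the claimed formula. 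There is no genuine obstacle in this lemma; it is a direct coordinate computation, and the only point requiring care is the correct application of the chain rule, since $u$ is evaluated at $\left|x\right|^2$ rather than at $\left|x\right|$, which is precisely what generates the factors of $4$ and $2$ in the two terms.
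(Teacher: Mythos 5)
Your proof is correct and follows essentially the same route as the paper's: both first compute $\texttt{grad}\,(h)=2u'(\left|x\right|^2)\vec{x}$ and then differentiate once more by the chain and product rules. The only difference is presentational — the paper differentiates covariantly, writing $(Hess\,h)(Y,Z)=\left\langle \bar{\nabla}_Y\,\texttt{grad}\,(h),Z\right\rangle$ and using $\bar{\nabla}_Y\vec{x}=Y$, whereas you carry out the identical computation with second partial derivatives in Cartesian coordinates.
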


\begin{proof} Note that the gradients vectors of the functions $h$ and $u'(\left|x\right|^2)$ are given, respectively, by
\[
\texttt{grad}\,(h) = 2u'(\left|x\right|^2)\vec{x}\quad \mbox{and} \quad  \texttt{grad}\,(u'(\left|x\right|^2)) = 2u''(\left|x\right|^2)\vec{x}\,.
\]
Hence, 
\begin{eqnarray*} 
(Hess\,h)(x)(Y,Z) &=& \left\langle \bar{\nabla}_Y \texttt{grad}\,(h),Z \right\rangle = 2 \left\langle \bar{\nabla}_Y u'(\left|x\right|^2)\vec{x},Z \right\rangle \\
									&=& 2 \left\langle Y(u'(\left|x\right|^2))\vec{x} + u'(\left|x\right|^2) \bar{\nabla}_Y \vec{x},Z \right\rangle \\
									&=& 4 u''(\left|x\right|^2)\left\langle \vec{x},Y\right\rangle \left\langle \vec{x},Z\right\rangle + 2u'(\left|x\right|^2)\left\langle Y,Z\right\rangle
\end{eqnarray*}
\end{proof}

\begin{lemma}\label{limitrici} Consider $B_{\bar{R}}=(\mathbb{B}_R^{n+1},\bar{g})$. Suppose that function $u$ satisfies $u''(\left|x\right|^2)-u'(\left|x\right|^2)^2 \leq 0$. Then, if $\nb$ is an unit vector in the tangent space $T_xB_{\bar{R}}$, we have 
\[
\bar{R}ic(\bar{N},\bar{N})(x) \leq - 4ne^{-2h}[u''(\left|x\right|^2)\left|x\right|^2+u'(\left|x\right|^2)]\,.
\]
\end{lemma}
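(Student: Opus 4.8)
The plan is to apply the conformal Ricci formula, item iii) of Proposition \ref{conexaomudconf}, taking the background metric $g$ to be the Euclidean one (so that $Ric\equiv 0$) and evaluating at $Z=X=\bar{N}$. First I would substitute the Hessian computed in the previous lemma, $(\mathrm{Hess}\,h)(\bar{N},\bar{N})=4u''\langle\vec{x},\bar{N}\rangle^2+2u'\,g(\bar{N},\bar{N})$, together with the three remaining scalar quantities attached to $h(x)=u(|x|^2)$: the gradient $\nabla h=2u'\vec{x}$ gives $\bar{N}(h)=2u'\langle\vec{x},\bar{N}\rangle$ and $g(\nabla h,\nabla h)=4u'^2|x|^2$, while tracing the Hessian over a $g$-orthonormal frame of the $(n+1)$-dimensional space yields $\Delta h=4u''|x|^2+2(n+1)u'$. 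The only normalization I need is that $\bar{N}$ is a unit vector for $\bar{g}=e^{2h}g$, which forces $g(\bar{N},\bar{N})=e^{-2h}$.

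After inserting these expressions into iii) and regrouping, the terms organize into two families: those proportional to $\langle\vec{x},\bar{N}\rangle^2$, which collect to $4(n-1)(u'^2-u'')\langle\vec{x},\bar{N}\rangle^2$, and the remaining terms, which collect to $e^{-2h}[-4nu'-4u''|x|^2-4(n-1)u'^2|x|^2]$. The delicate point, and the main obstacle, is the term carrying $\langle\vec{x},\bar{N}\rangle^2$: since $\bar{N}$ is an arbitrary unit direction this quantity is not controlled pointwise and cannot simply be discarded. This is exactly where the hypothesis enters. Because $u''-u'^2\le0$, the coefficient $4(n-1)(u'^2-u'')$ is nonnegative, so I may bound $\langle\vec{x},\bar{N}\rangle^2$ from above by Cauchy--Schwarz for $g$, namely $\langle\vec{x},\bar{N}\rangle^2\le g(\vec{x},\vec{x})\,g(\bar{N},\bar{N})=|x|^2e^{-2h}$.

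Replacing $\langle\vec{x},\bar{N}\rangle^2$ by $|x|^2e^{-2h}$ in this nonnegative-coefficient family produces the upper bound
\[
\bar{R}ic(\bar{N},\bar{N})\le e^{-2h}\Big[4(n-1)(u'^2-u'')|x|^2-4nu'-4u''|x|^2-4(n-1)u'^2|x|^2\Big].
\]
Here the two $u'^2|x|^2$ contributions cancel and the surviving $u''|x|^2$ terms combine with coefficient $-4n$, leaving exactly $-4ne^{-2h}[u''(|x|^2)|x|^2+u'(|x|^2)]$, the claimed inequality. Thus the whole argument is a direct substitution whose single nontrivial ingredient is the sign condition $u''-u'^2\le0$, used precisely to turn the Cauchy--Schwarz estimate on $\langle\vec{x},\bar{N}\rangle^2$ into an upper bound rather than a lower one.
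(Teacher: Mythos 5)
Your proposal is correct and follows essentially the same route as the paper: both apply item iii) of Proposition \ref{conexaomudconf} with Euclidean background, substitute the explicit Hessian, gradient, and Laplacian of $h(x)=u(|x|^2)$, and use the sign hypothesis $u''-u'^2\le 0$ together with Cauchy--Schwarz to replace $\langle\vec{x},\cdot\rangle^2$ by $|x|^2$ times the squared norm of the normal. The only (cosmetic) difference is that the paper first rescales to the Euclidean-unit vector $N=e^h\bar{N}$ and factors out $e^{-2h}$ by bilinearity, whereas you keep $\bar{N}$ and carry $g(\bar{N},\bar{N})=e^{-2h}$ through the computation; the algebra and the final cancellation are identical.
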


\begin{proof} Note that if $\nb \in T_xB_{\bar{R}}$ satisfy $\gb(\nb,\nb)=1$ then the vector $N=e^{h}\nb$ is such that $\left\langle N,N\right\rangle=1$. It follows from the conformal change formulas that\begin{eqnarray*}
\bar{R}ic(\bar{N},\bar{N})&=&e^{-2h}\bar{R}ic(N,N)\\
												&=&e^{-2h}\Big\{ Ric_{_{\R^{n+1}}}(N,N) - (n-1)(Hess\,h)(N,N)+(n-1)N(h)^2  \\
												& & -\{\Delta h + (n-1)\left|\nabla h\right|^2 \}\left\langle N,N\right\rangle \Big\}\\
												&=& e^{-2h}\Big\{ -4(n-1)u''(\left|x\right|^2)\left\langle \vec{x},N\right\rangle^2 - 2(n-1)u'(\left|x\right|^2) \\
												& & + 4(n-1)u'(\left|x\right|^2)^2\left\langle \vec{x},N\right\rangle^2 \\
												& & -4u''(\left|x\right|^2)\left|\vec{x}\right|^2-2(n+1)u'(\left|x\right|^2) -4(n-1)u'(\left|x\right|^2)^2\left|\vec{x}\right|^2 \Big\}\\			 &=& e^{-2h}\Big\{ 4(n-1)[u'(\left|x\right|^2)^2-u''(\left|x\right|^2)]\left\langle \vec{x},N\right\rangle^2 - 4nu'(\left|x\right|^2) \\
												& & -4u''(\left|x\right|^2)\left|\vec{x}\right|^2-4(n-1)u'(\left|x\right|^2)^2\left|\vec{x}\right|^2 \Big\}\\
												&\overbrace{\leq}^{(*)}& e^{-2h}\Big\{ 4(n-1)[u'(\left|x\right|^2)^2-u''(\left|x\right|^2)]\left|x\right|^2 - 4nu'(\left|x\right|^2) \\
												& & -4u''(\left|x\right|^2)\left|\vec{x}\right|^2-4(n-1)u'(\left|x\right|^2)^2\left|\vec{x}\right|^2 \Big\}\\
												&=& -4ne^{-2h}\Big\{ u''(\left|x\right|^2)\left|\vec{x}\right|^2 + u'(\left|x\right|^2) \Big\}
\end{eqnarray*}
where we have used in $(*)$, the condition satisfied by $u$. Therefore,
\[
\bar{R}ic(\bar{N},\bar{N}) \leq -4ne^{-2h}[u''(\left|x\right|^2)\left|x\right|^2+u'(\left|x\right|^2)]\,.
\]
\end{proof}

\begin{lemma}\label{condhad} Consider $B_{\bar{R}}=(\mathbb{B}_R^{n+1},\bar{g})$. Suppose that function $u$ satisfies the following conditions:\\
$i) u''(\left|x\right|^2)-u'(\left|x\right|^2)^2\leq0 $ \\
$ii)  -u''(\left|x\right|^2)\left|\vec{x}\right|^2 - u'(\left|x\right|^2) \leq  0 $ \\
Then $B_{\bar{R}}$ is a Hadamard space.
\end{lemma}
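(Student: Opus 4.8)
The plan is to verify directly the three defining properties of a Hadamard space: simple-connectedness, non-positive sectional curvature, and completeness. Simple-connectedness is immediate, since the underlying manifold $\mathbb{B}_R^{n+1}$ is an open ball, hence diffeomorphic to $\R^{n+1}$, and the conformal change of metric does not affect the topology. The genuine content lies in the sign of the sectional curvature: the preceding Lemma \ref{limitrici} only controls the Ricci tensor $\bar{R}ic$, whereas the Hadamard condition demands that \emph{every} sectional curvature be $\leq 0$. So I must run a two-plane computation, and this is exactly where both hypotheses $i)$ and $ii)$ will be forced to act simultaneously.

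First I would compute the sectional curvature of $\bar g$ from the conformal curvature formula of Proposition \ref{conexaomudconf}$(ii)$, using that the background Euclidean curvature $R$ vanishes. Fixing a point and a $\langle\,,\,\rangle$-orthonormal pair $\{e_1,e_2\}$, the rescaled pair $\{e^{-h}e_1,e^{-h}e_2\}$ is $\bar g$-orthonormal, and by multilinearity of the curvature tensor a direct contraction of formula $(ii)$ yields, for the plane spanned by $e_1,e_2$,
\[
\bar K(e_1,e_2)=e^{-2h}\Big[-(Hess\,h)(e_1,e_1)-(Hess\,h)(e_2,e_2)+e_1(h)^2+e_2(h)^2-\langle\nabla h,\nabla h\rangle\Big].
\]
I would then substitute the two base-geometry ingredients already at hand, namely the Hessian formula $(Hess\,h)(Y,Z)=4u''(|\vec{x}|^2)\langle\vec{x},Y\rangle\langle\vec{x},Z\rangle+2u'(|\vec{x}|^2)\langle Y,Z\rangle$ and $\nabla h=2u'(|\vec{x}|^2)\vec{x}$. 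Writing $a=u'(|\vec{x}|^2)$, $b=u''(|\vec{x}|^2)$ and $p_i=\langle\vec{x},e_i\rangle$, the bracket collapses to
\[
4(a^2-b)(p_1^2+p_2^2)-4a-4a^2|\vec{x}|^2.
\]

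The step that ties the two hypotheses together comes next, and it is the heart of the argument. Hypothesis $i)$ is exactly $a^2-b\geq0$, so the first term is non-negative and may be estimated from above using the elementary projection bound $p_1^2+p_2^2=\langle\vec{x},e_1\rangle^2+\langle\vec{x},e_2\rangle^2\leq|\vec{x}|^2$; this gives
\[
4(a^2-b)(p_1^2+p_2^2)-4a-4a^2|\vec{x}|^2\leq 4(a^2-b)|\vec{x}|^2-4a-4a^2|\vec{x}|^2=-4\big(b|\vec{x}|^2+a\big).
\]
Since hypothesis $ii)$ is precisely $b|\vec{x}|^2+a\geq0$, the bracket is $\leq 0$, hence $\bar K\leq 0$ for every two-plane. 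Thus non-positivity of the sectional curvature is equivalent to feeding the projection estimate into the coefficient controlled by $i)$ and then reading off $ii)$; this simultaneous use of both conditions is what is \emph{not} visible at the level of the Ricci bound of Lemma \ref{limitrici}, and it is the main conceptual obstacle to clear.

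Finally I would settle completeness. Rewriting $ii)$ as $\tfrac{d}{dt}\big(t\,u'(t)\big)\geq 0$ shows that $t\,u'(t)$ is non-decreasing and vanishes at $t=0$, so $u'\geq 0$ and $u$ is non-decreasing; hence $\bar g\geq e^{2u(0)}\langle\,,\,\rangle$. When $R=\infty$ the conformal metric then dominates a fixed positive multiple of the complete Euclidean metric, and a standard Cauchy-sequence argument (using local boundedness of $e^{2h}$ to transfer the limit back into $B_{\bar R}$) shows completeness. For finite $R$ completeness is equivalent to the radial length $\bar r(r)=\int_0^{r}e^{u(s^2)}\,\mathrm{d}s$ diverging as $r\to R^-$, which forces the conformal factor to blow up at the boundary, as in the hyperbolic model. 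This finite-radius case is the analytically delicate point I expect to require the most care, since $i)$ and $ii)$ alone do not control the behaviour of $u$ near a finite boundary. With non-positive curvature, simple-connectedness and completeness established, $B_{\bar R}$ is a Hadamard space.
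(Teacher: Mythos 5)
Your sectional-curvature computation is essentially identical to the paper's proof: both contract the conformal curvature formula of Proposition \ref{conexaomudconf} against a Euclidean-orthonormal frame, reduce the bracket to $4\left(u'^2-u''\right)\left(\langle\vec{x},E_1\rangle^2+\langle\vec{x},E_2\rangle^2\right)-4u'-4u'^2|\vec{x}|^2$, apply the projection bound $\langle\vec{x},E_1\rangle^2+\langle\vec{x},E_2\rangle^2\leq|\vec{x}|^2$ together with hypothesis $i)$, and conclude with $ii)$. The only substantive difference is that the paper declares the curvature sign to be all that needs checking, whereas you also discuss completeness; your remark that $i)$ and $ii)$ alone do not force the radial length $\int_0^R e^{u(s^2)}\,\mathrm{d}s$ to diverge when $R<\infty$ is correct and in fact exposes a gap in the lemma as stated (for instance $u\equiv 0$ on a finite ball satisfies both hypotheses but yields an incomplete metric), so for finite $R$ an additional hypothesis is genuinely required.
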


\begin{proof} We just need to prove that seccional curvature $\bar{K}$ satisfies $\bar{K}(x)(\pi)\leq 0$  for any point $x \in B_{\bar{R}}$ and any plane $\pi \subset T_xB_{\bar{R}}$. Consider $\bar{E}_i$, $i=1,2$, a base for $\pi \subset T_xB_{\bar{R}}$ with $\bar{g}(\bar{E}_i,\bar{E}_j)=\delta_{ij}$. Thus, for $E_i=e^{h}\bar{E}_i$ we obtain $\left\langle E_i,E_j \right\rangle=\delta_{ij}$. It follows from the Lemma \ref{limitrici} and from the relations in the conformal change for the curvature obtained in $\textbf{Proposition \ref{conexaomudconf}}$, that
\begin{eqnarray*}
\bar{K}(x)(\pi)&=&\bar{g}(\bar{R}(\bar{E}_1,\bar{E}_2)\bar{E}_2,\bar{E}_1)\\
               &=& e^{-2h} \left\langle \bar{R}(E_1,E_2)E_2,E_1 \right\rangle \\
							 &=& e^{-2h} \Big \{ \left\langle R(E_1,E_2)E_2,E_1 \right\rangle + \left\langle E_1,E_2 \right\rangle \left\langle \nabla_{E_2}\nabla h,E_1 \right\rangle \\
							 & &   - \left\langle E_2,E_2 \right\rangle \left\langle \nabla_{E_1}\nabla h,E_1 \right\rangle  \\
							 & & -\big\{ (Hess\,h)(E_2,E_2) - E_2(h)^2 + \left|\nabla h\right|^2 \left\langle E_2,E_2\right\rangle \big \} \left\langle E_1,E_1 \right\rangle\\
							 & & +\big \{ (Hess\,h)(E_1,E_2) - E_1(h)E_2(h) + \left|\nabla h\right|^2 \left\langle E_1,E_2\right\rangle \big \}\left\langle E_2,E_1 \right\rangle\\
							 & & +\big \{ E_1(h)\left\langle E_2,E_2\right\rangle - E_2(h)\left\langle E_1,E_2\right\rangle \big \}\left\langle \nabla h,E_1\right\rangle \Big \}  \\ 
							 &=& e^{-2h}\Big\{ -(Hess\,h)(E_1,E_1)-(Hess\,h)(E_2,E_2)  \\
							 & & + E_2(h)^2 - \left|\nabla h\right|^2 +E_1(h)^2 \Big\}\\
							 &=& e^{-2h}\big\{ - 4 u''(\left|x\right|^2)\left\langle \vec{x},E_1\right\rangle^2 - 2u'(\left|x\right|^2) - 4 u''(\left|x\right|^2)\left\langle \vec{x},E_2\right\rangle^2   \\ 
							 &-&  2u'(\left|x\right|^2) + 4u'(\left|x\right|^2)^2\left\langle \vec{x},E_1\right\rangle^2 + 4u'(\left|x\right|^2)^2\left\langle\vec{x},E_2\right\rangle^2 - 4u'(\left|x\right|^2)^2\left|\vec{x}\right|^2 \big\} \\
							 &=& 4e^{-2h} \Big\{ \left[ u'(\left|x\right|^2)^2 - u''(\left|x\right|^2) \right] \big(\left\langle \vec{x},E_1\right\rangle^2 +\left\langle \vec{x},E_2\right\rangle^2 \big)\\ 
							 & &  - u'(\left|x\right|^2) - u'(\left|x\right|^2)^2\left|\vec{x}\right|^2  \Big\}
\end{eqnarray*} 

Note that,
\[
\left\langle \vec{x},E_1\right\rangle^2 +\left\langle \vec{x},E_2\right\rangle^2 = \left|x\right|^2(\cos^2(\theta_1)+\cos^2(\theta_2))\,,
\]
where $\theta_i$ denote the angle between $\vec{x}$ and $E_i$, $i=1,2$. Since the angle between  $E_1$ and $E_2$ is $\frac{\pi}{2}$, for every fixed $0\leq \theta_1 \leq \frac{\pi}{2}$, we obtain that $ \frac{\pi}{2}-\theta_1 \leq \theta_2 \leq \frac{\pi}{2}+\theta_2$, and then $\cos^2(\theta_2)\leq \cos^2(\frac{\pi}{2}+\theta_1)$. Consequently, 
\[
\cos^2(\theta_1)+\cos^2(\theta_2) \leq \cos^2(\theta_1) + \cos^2(\frac{\pi}{2}+\theta_1) \leq \cos^2(\theta_1) + \sin^2(\theta_1)=1
\]
and $\left\langle \vec{x},E_1\right\rangle^2 +\left\langle \vec{x},E_2\right\rangle^2 \leq \left|\vec{x}\right|^2 $. From the condition $i)$ we obtain
\[
\left[ u'(\left|x\right|^2)^2 - u''(\left|x\right|^2) \right] \big(\left\langle \vec{x},E_1\right\rangle^2 +\left\langle \vec{x},E_2\right\rangle^2 \big) \leq \left[ u'(\left|x\right|^2)^2 - u''(\left|x\right|^2) \right]\left|\vec{x}\right|^2 
\]
and from the condition $ii)$ follows that,  
\begin{eqnarray*}
\bar{K}(x)(\pi)&\leq& 4e^{-2h} \{ \left[ u'(\left|x\right|^2)^2 - u''(\left|x\right|^2) \right]\left|\vec{x}\right|^2 - u'(\left|x\right|^2) - u'(\left|x\right|^2)^2\left|\vec{x}\right|^2  \}\\
							 &\leq& 4e^{-2h} \{ -u''(\left|x\right|^2)\left|\vec{x}\right|^2 - u'(\left|x\right|^2) \}\\
							 &\leq& 0
\end{eqnarray*}
\end{proof}
\begin{example} Consider $u_1:[0,\infty) \rightarrow \R$ given by $u_1\equiv0$ and $u_2:[0,1) \rightarrow \R$ given by $u_2(t)=\ln(\frac{2}{1-t})$. The functions $u_1$ and $u_2$ satisfy the conditions $i)$ and $ii)$ in previous Lemma. In fact, $M_1:=(\B_{\infty}^{n+1},\bar{g})$ for $\gb=e^{2u_1(\left|x\right|^2)}\left\langle , \right\rangle$ is the Euclidian space $\R^{n+1}$ and $M_2:=(\B_{1}^{n+1},\bar{g})$, for $\gb=e^{2u_2(\left|x\right|^2)}\left\langle , \right\rangle$, is the Hyperbolic Space $\H^{n+1}$. Both are Hadamard spaces.
\end{example}

\begin{example} Consider $u:[0,\infty) \rightarrow \R$ given by $u(t)=\frac{t}{4n}$. The conditions $i)$ and $ii)$ are satisfied for all  $t$. Therefore, $(\B_R^{n+1},e^{\frac{\left|x\right|^2}{2n}}\left\langle , \right\rangle)$ with $R=\infty$ is a Hadamard space.
\end{example}

\section{GAP Results for $\left|A\right|^2$ of free boundary minimal hypersurfaces in Euclidean conformal ball} 

From now on, we consider  $B_{\bar{R}}=(\B_R^{n+1},\bar{g})$ where $\bar{g}=e^{2u(\left|x\right|^2)}\left\langle , \right\rangle$ for a function $u$ which satisfies the conditions $i)$ and $ii)$ so that $B_{\bar{R}}$ becomes a Hadamard space. For every $r<R$ we have $\B_r^{n+1} \subset \B_R^{n+1}$ and we consider $B_{\bar{r}}$ as a submanifold of $B_{\bar{R}}$. 

\begin{lemma}\label{autovposi} Let $B_{\bar{R}}$ a Hadamard space and, for $\bar{r}_0<\bar{R}$, consider $\Sigma^n \hookrightarrow B_{\bar{r}_{_0}}\subset B_{\bar{R}}$ an immersed compact minimal hypersurface with boundary $\partial \Sigma$. Consider on $\Sigma$ the operator $L:= \Delta + \left|A\right|^2 + q $ where $q:\Sigma \rightarrow $ is a nonpositive smooth function. Assume that the inequality 
\begin{equation}\label{gaphada}
\left|A\right|^2 \leq \frac{n^2}{4\bar{r}_{0}^{2}}
\end{equation}
is satisfied on $\Sigma$. Then, the first eigenvalue $\lambda_1$ of the operator $L$ for the problem
\begin{center}
$(*)\left\{\begin{matrix}
L[v]& = & -\lambda v \quad \Sigma\\ 
v& = & 0 \quad \partial \Sigma 
\end{matrix}\right.$
\end{center}
is strictly positive and consequently, if $w$ satisfy $L[w]=0$ for the problem $(*)$ above, then $w\equiv0$. 
\end{lemma}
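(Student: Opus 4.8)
The plan is to combine the variational characterization of the first Dirichlet eigenvalue with the Mirandola--Batista--Vit\'orio inequality (\ref{MBV2}) taken at $\gamma=0$, $p=2$. First I would record that, for the problem $(*)$, multiplying $L[v]=-\lambda v$ by $v$ and integrating by parts (the boundary integral vanishing since $v=0$ on $\partial\Sigma$) yields the Rayleigh quotient
\[
\lambda_1=\inf_{\substack{v\in H^1_0(\Sigma)\\ v\not\equiv 0}}\frac{\int_{\Sigma}|\nabla^{\Sigma}v|^2-\int_{\Sigma}(|A|^2+q)v^2}{\int_{\Sigma}v^2}.
\]
Thus it suffices to prove that the numerator is strictly positive for every $v\in H^1_0(\Sigma)$ with $v\not\equiv 0$.

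To bound the numerator from below I would apply (\ref{MBV2}) with $\gamma=0$ and $\psi=v$. Since $v$ vanishes on $\partial\Sigma$ the boundary term disappears, leaving
\[
\frac{n^2}{4}\int_{\Sigma}v^2\le\int_{\Sigma}|\nabla^{\Sigma}v|^2\,\bar r^2;
\]
moreover, because equality in (\ref{MBV2}) for $p=2$ forces $v\equiv 0$, this inequality is strict whenever $v\not\equiv 0$. As $\Sigma\hookrightarrow B_{\bar r_0}$ we have $\bar r\le\bar r_0$ on $\Sigma$, hence $\int_{\Sigma}|\nabla^{\Sigma}v|^2\bar r^2\le \bar r_0^2\int_{\Sigma}|\nabla^{\Sigma}v|^2$, and so
\[
\frac{n^2}{4\bar r_0^2}\int_{\Sigma}v^2<\int_{\Sigma}|\nabla^{\Sigma}v|^2.
\]
Using $q\le 0$ together with the hypothesis $|A|^2\le \frac{n^2}{4\bar r_0^2}$ then gives
\[
\int_{\Sigma}(|A|^2+q)v^2\le\int_{\Sigma}|A|^2v^2\le\frac{n^2}{4\bar r_0^2}\int_{\Sigma}v^2<\int_{\Sigma}|\nabla^{\Sigma}v|^2,
\]
which is exactly the strict positivity of the numerator.

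Since $\Sigma$ is compact with boundary, the infimum defining $\lambda_1$ is attained by a nonzero first eigenfunction $v_1\in H^1_0(\Sigma)$, for which the displayed strict inequality applies; therefore $\lambda_1>0$. For the final consequence I would observe that a nonzero $w$ solving $L[w]=0$ with $w=0$ on $\partial\Sigma$ would be an eigenfunction of $(*)$ with eigenvalue $0$, contradicting $0<\lambda_1\le\lambda$ for every eigenvalue $\lambda$; hence $w\equiv 0$.

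The step I expect to be the most delicate is the passage from a pointwise strictly positive Rayleigh quotient to the strict positivity of $\lambda_1$ itself, since a priori the infimum of strictly positive quantities could still be zero. This is precisely where compactness of $\Sigma$ enters: the first Dirichlet eigenfunction exists and realizes the infimum, so the strict inequality can be applied directly to it rather than only in the limit.
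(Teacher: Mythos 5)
Your argument is correct and uses exactly the same ingredients as the paper: integration by parts for the eigenvalue problem, the Mirandola--Batista--Vit\'orio inequality (\ref{MBV2}) with $\gamma=0$, the bound $\bar r\le\bar r_0$ on $\Sigma$, and the rigidity (equality) case of that theorem to force a first eigenfunction to vanish. The only difference is presentational: the paper runs the same chain as a contradiction argument starting from $\lambda_1\le 0$ and a positive first eigenfunction $v_1$, whereas you phrase it directly through the Rayleigh quotient and the attainment of the infimum (and you should, as the paper implicitly does, apply (\ref{MBV2}) to the nonnegative eigenfunction $v_1$ rather than to an arbitrary sign-changing $v\in H^1_0$, since the theorem is stated for $0\le\psi\in C^1$).
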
 

\begin{proof} Suppose by contradiction that $\lambda_1 \leq 0$. Let $v_1$ such that

\begin{center}
$\left\{\begin{matrix}
\Delta v_1 + (\left|A\right|^2+q)v_1 & = & -\lambda_1 v_1 \quad \Sigma\\ 
v_1& = & 0 \quad \quad \quad \partial \Sigma 
\end{matrix}\right.$
\end{center}
Then $v_1$ can be assumed positive, since the first eigenvalue of that problem is simple. It follows from integration by parts, the inequality $(\ref{gaphada})$ and $\lambda_1+q\leq0$, that  
\begin{equation}\label{estimagrad}
\int_{\Sigma} \left|\nabla v_1\right|^2 = \int_{\Sigma} -v_1\Delta v_1 d\Sigma = \int_{\Sigma}  (\lambda_1 + \left|A\right|^2+q)v_1^2 \leq \int_{\Sigma} \left|A\right|^2v_1^2 \leq \frac{n^2}{4\bar{r}_0^{2}}\int_{\Sigma} v_1^2 \\
\end{equation} 
Considering $\bar{r}$ the distance from a point $p\in \Sigma$ to the point $\vec{0}\in B_{\bar{r}}$, we have  $\bar{r}\leq \bar{r}_0$. Since $v_1\equiv0$ on $\partial \Sigma$, the inequality $(\ref{MBV2})$ for $\gamma = 0 $ gives us 
\begin{equation}\label{estima01}
\frac{n^2}{4} \int_\Sigma { v_1^2 } \leq \int_{\Sigma} \left|\nabla v_1\right|^2 \bar{r}^{2} \leq \bar{r}_0^{2}\int_{\Sigma} \left|\nabla v_1\right|^2 																										
\end{equation}
Hence, It follows from the estimate $(\ref{estimagrad})$ that
\begin{equation*}
\frac{n^2}{4} \int_\Sigma { v_1^2 } \leq \bar{r}_0^{2}\frac{n^2}{4\bar{r}_0^{2}}\int_{\Sigma} v_1^2=\frac{n^2}{4} \int_\Sigma { v_1^2 }
\end{equation*}
This means that equality occurs in $(\ref{estima01})$ and, from the result $\textbf{Theorem \ref{MBV}}$, the function $v_1$ should be the null function. This is a contradiction. Therefore, $\lambda_1>0$.
\end{proof}

\begin{remark}\label{obsteo} If we consider the condition 
\begin{equation}\label{gaphada2}
\left|A\right|^2 \leq \frac{(n-2)^2}{4\bar{r}^{2}} \quad \mbox{in} \quad \Sigma \setminus \vec{0}
\end{equation}
instead of  $(\ref{gaphada})$, where $\bar{r}=\bar{r}(x)$ denotes the distance in $B_{\bar{R}}$ \ from a point $x\in\Sigma$ to the point $\vec{0}$, and using $\gamma=2$, we obtain the same result: $\lambda_1$ is strictly positive and consequently the solution $L[v]=0$ for the problem $(*)$ above is the null function. 
\end{remark}

Note that the inequality $(\ref{gaphada})$ imposes a more rigid constraint on the total length of $\left|A\right|^2$, whereas the inequality $(\ref{gaphada2})$ just imposes a control in the growth of $\left|A\right|^2$, making the last case a less restrictive condition, allowing, for example, a situation where $\left|A\right|^2$ becomes arbitrarily large near the point $\vec{0}$.


\begin{lemma}\label{solsuport} Let $\Sigma^n \hookrightarrow B_{\bar{r}_0} \subset B_{\bar{R}}$ be a free boundary minimal hypersurface. Consider the function $v=\bar{g}(\vec{x},\bar{N})$ defined on $\Sigma$ where $\bar{N}$ denotes the normal vector field to $\Sigma$. Then, the function $v$ is solution to following problem
\begin{center}
$\left\{\begin{matrix}
L[v]&=& 0 \quad \Sigma\\ 
v & = & 0 \quad \partial \Sigma 
\end{matrix}\right.$
\end{center}
where $L[v]=\Delta v + \left|A\right|^2v+qv$ and,
$$
q:= \bar{R}ic(\bar{N},\bar{N}) + 4ne^{-2h}[u''(\left|x\right|^2)\left|x\right|^2+u'(\left|x\right|^2)]
$$
\end{lemma}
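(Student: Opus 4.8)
The plan is to read $v=\bar g(\vec x,\bar N)$ as the \emph{support function} of the position field $\vec x$, to observe that $\vec x$ is a \emph{conformal} vector field for $\bar g$, and then to invoke the Jacobi-type identity satisfied by support functions of conformal fields on minimal hypersurfaces. First I would record that $\vec x$ is conformal for $\bar g$: using $\bar g=e^{2h}\langle,\rangle$ and $\mathcal{L}_{\vec x}\langle,\rangle=2\langle,\rangle$, a direct Lie-derivative computation gives $\mathcal{L}_{\vec x}\bar g=2\phi\,\bar g$ with $\phi=1+\vec x(h)=1+2u'(|x|^2)|x|^2$. Equivalently, writing $\bar\nabla_Y\vec x=\phi\,Y+BY$, the tensor $B$ is $\bar g$-antisymmetric (it is the antisymmetric part of $\bar\nabla\vec x$, whose symmetric part is $\phi\,\mathrm{Id}$).

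The core step is to establish the identity
\[
\Delta_\Sigma v+\bigl(|A|^2+\bar{R}ic(\bar N,\bar N)\bigr)v+n\,\bar N(\phi)=0,
\]
valid for any minimal $\Sigma$ and any conformal field with factor $\phi$. I would prove it by differentiating $v$ twice in a frame $\{e_i\}$ geodesic at $p\in\Sigma$. The first derivative is $\nabla^\Sigma_{e_i}v=\bar g(Be_i,\bar N)-\sum_j h_{ij}\bar g(\vec x,e_j)$. Taking the trace of the second derivative, minimality ($\operatorname{tr}A=0$) annihilates the cross term $-\phi\operatorname{tr}A$ and the symmetric–antisymmetric contraction $\sum_{ij}h_{ij}\bar g(Be_i,e_j)$; the term $\sum_i\bar g(\bar\nabla^2_{e_i,e_i}\vec x,\bar N)$ yields, through the conformal second-derivative identity $\bar\nabla^2_{Z,Y}\vec x=(Z\phi)Y+(\bar\nabla_Z B)Y$ and the antisymmetry of $B$, the contributions $-\bar{R}ic(\bar N,\bar N)\,v-n\,\bar N(\phi)$ together with a tangential piece $-\bar{R}ic(\bar N,\vec x^{\,T})$; and the term $\sum_i\bar g(\vec x,\bar\nabla_{e_i}\bar\nabla_{e_i}\bar N)$ produces $-|A|^2v$ together with a divergence $-\sum_{ij}(e_ih_{ij})\bar g(\vec x,e_j)$ which, by the contracted Codazzi equation and minimality, equals $+\bar{R}ic(\bar N,\vec x^{\,T})$. \textbf{The main obstacle is verifying that these two tangential Ricci terms cancel}: this is the only delicate point, and it forces one to combine the curvature symmetries with Codazzi in exactly the right way, after which only the normal Ricci term, $|A|^2$, and $n\,\bar N(\phi)$ survive.

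It then remains to match $n\,\bar N(\phi)$ with the prescribed potential. Using $\bar N=e^{-h}N$ with $N$ the Euclidean unit normal, the relation $\langle\vec x,N\rangle=e^{-h}v$, and the Euclidean gradient $\operatorname{grad}\phi=4\bigl(u''(|x|^2)|x|^2+u'(|x|^2)\bigr)\vec x$, one computes $\bar N(\phi)=4e^{-2h}\bigl(u''(|x|^2)|x|^2+u'(|x|^2)\bigr)v$. Substituting into the identity gives precisely $\Delta v+|A|^2v+qv=0$ with $q=\bar{R}ic(\bar N,\bar N)+4ne^{-2h}[u''(|x|^2)|x|^2+u'(|x|^2)]$, as claimed; note that Lemma \ref{limitrici} then also confirms $q\le 0$, which is what makes this $v$ an admissible test for Lemma \ref{autovposi}.

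Finally, for the boundary condition: since $h=u(|x|^2)$ is radial, the sphere $\partial B_{\bar r_0}=\{|x|=r_0\}$ has $\bar g$-normal parallel to $\vec x$, and the free boundary condition forces $\bar N$ to be tangent to $\partial B_{\bar r_0}$ along $\partial\Sigma$. Hence $\bar N\perp_{\bar g}\vec x$ there, and $v=\bar g(\vec x,\bar N)=0$ on $\partial\Sigma$, completing the verification that $v$ solves the stated boundary value problem.
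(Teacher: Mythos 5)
Your proposal is correct and follows the same overall strategy as the paper: identify $\vec{x}$ as a conformal field with factor $\sigma=1+2u'(|x|^2)|x|^2$, plug its support function into the Jacobi-type identity $\Delta v+(|A|^2+\bar{R}ic(\bar{N},\bar{N}))v=-n\bar{N}(\sigma)$, compute $\bar{N}(\sigma)=4e^{-2h}\bigl(u''(|x|^2)|x|^2+u'(|x|^2)\bigr)v$, and read off $v=0$ on $\partial\Sigma$ from the free boundary condition together with the fact that the $\bar g$-normal to the sphere is radial. The one genuine difference is that the paper does not prove the central identity at all: it quotes it as Proposition 2.1 of \cite{CaLi}, whereas you derive it from scratch via the decomposition $\bar{\nabla}\vec{x}=\sigma\,\mathrm{Id}+B$ with $B$ antisymmetric and a traced second-derivative computation. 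Your derivation is sound in outline, and the step you single out as delicate -- the cancellation of the two tangential Ricci contributions, one from $\sum_i\bar g(\bar{\nabla}^2_{e_i,e_i}\vec{x},\bar N)$ and one from the divergence of the shape operator via the traced Codazzi equation $\sum_i(\nabla_{e_i}h)(Y,e_i)=\bar{R}ic(Y,\bar N)$ for minimal $\Sigma$ -- is exactly where the work lies; carrying it out explicitly would make the lemma self-contained, at the cost of a page of computation that the paper avoids by citation. Both routes land on the same operator $L$ and the same boundary argument, so the choice is purely one of self-containment versus brevity.
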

\begin{proof} Since the vector field $\vec{x}$ on $B_{\bar{R}}$ is coformal, i.e, the Lie derivative satisfies,
$$
\mathcal{L}_{\vec{x}}\bar{g}=2\sigma \bar{g}
$$ 
where $\sigma(x)=1+2u'(\left|x\right|^2)\left|x\right|^2$, the Proposition $2.1$ of \cite{CaLi} ensures that
\begin{equation}\label{lap4}
\Delta v + \left|A\right|^2v + \bar{\mbox{R}}\mbox{ic}\big(\nb,\nb \big)v = -n\nb(\sigma)
\end{equation}
Now, we just need to calculate the right side of the above equation. The gradient of the function $\sigma$ with respect to the Euclidean metric is given by,
$$
\texttt{grad}\,(\sigma) = 4[u''(\left|x\right|^2)\left|x\right|^2+u'(\left|x\right|^2)]\x
$$
but, the gradient of $\sigma$ with respect to the metric $\gb$ is given by $\cb \sigma= e^{-2h}\texttt{grad}\,(\sigma)$ thus, 
$$
\cb \sigma = 4e^{-2h}[u''(\left|x\right|^2)\left|x\right|^2+u'(\left|x\right|^2)]\x
$$
therefore,
\begin{equation}\label{derivnorm}
\nb (\sigma) = \gb\big (\cb \sigma, \nb \big) = 4e^{-2h}[u''(\left|x\right|^2)\left|x\right|^2+u'(\left|x\right|^2)]\gb(\x,\nb)
\end{equation}
replacing $(\ref{derivnorm})$ on $(\ref{lap4})$ we have
$$
\Delta v + \left|A\right|^2v + \bar{\mbox{R}}\mbox{ic}\big(\nb,\nb \big)v  +4ne^{-2h}[u''(\left|x\right|^2)\left|x\right|^2+u'(\left|x\right|^2)]v  = 0
$$ 
as desired. Furthermore, $v\equiv 0$ on $\partial \Sigma$. Indeed, since $\Sigma$ is free boundary, we get $\tilde{N} = \nu$ where $\tilde{N}$ denotes the normal vector to $\partial B_{r_0}$ and $\nu$ is a co-normal to $\Sigma$ on $\partial \Sigma$. Because $\vec{x}//\tilde{N}=\nu$ on $\partial B_{r_0}$ and $\bar{g}(\nu,\bar{N})=0$, we have $\bar{g}(\vec{x},\bar{N})=0$ on $\partial \Sigma$.

\end{proof}

\begin{theorem}\label{teopringaphad} Let $\Sigma^n \hookrightarrow B_{\bar{r}_0} \subset B_{\bar{R}}$ be a free boundary minimal hypersurface, which satisfies 

\begin{equation}\label{gap0001}
\left|A\right|^2 \leq \frac{n^2}{4\bar{r}_0^2}.
\end{equation}
Then, $\Sigma$ is a totally geodesic disc through the origin.
\end{theorem}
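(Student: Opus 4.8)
The plan is to feed the support function $v=\bar{g}(\vec{x},\bar{N})$ into the eigenvalue argument of Lemma \ref{autovposi} and then extract the geometry from the forced vanishing of $v$. The three preceding lemmas are designed to fit together exactly for this, so the only genuinely nonroutine work is the concluding geometric interpretation.

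First I would verify that the potential
$$
q=\bar{R}ic(\bar{N},\bar{N})+4ne^{-2h}[u''(|x|^2)|x|^2+u'(|x|^2)]
$$
from Lemma \ref{solsuport} is everywhere nonpositive. This is immediate from Lemma \ref{limitrici}: condition $i)$ on $u$ gives $\bar{R}ic(\bar{N},\bar{N})\leq -4ne^{-2h}[u''(|x|^2)|x|^2+u'(|x|^2)]$, so the two displayed terms cancel up to a nonpositive remainder. Hence $q$ is an admissible potential for the operator $L=\Delta+|A|^2+q$ treated in Lemma \ref{autovposi}. Next, by Lemma \ref{solsuport} the support function $v$ solves $L[v]=0$ on $\Sigma$ with the Dirichlet condition $v=0$ on $\partial\Sigma$, where it is precisely the free-boundary hypothesis that forces $v|_{\partial\Sigma}=0$. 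Since the gap hypothesis \eqref{gap0001} is exactly the inequality required in Lemma \ref{autovposi} and $q\leq 0$, that lemma applies verbatim: the first eigenvalue of $L$ is strictly positive, so the boundary value problem $(*)$ has only the trivial solution. Therefore $v\equiv 0$ on $\Sigma$, i.e. $\bar{g}(\vec{x},\bar{N})\equiv 0$.

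It then remains to translate $\bar{g}(\vec{x},\bar{N})\equiv 0$ into the stated conclusion. Because the conformal factor only rescales lengths and preserves angles, this is equivalent to the vanishing of the Euclidean support function $\langle\vec{x},N\rangle\equiv 0$ (with $N=e^{h}\bar{N}$ the Euclidean unit normal, as in Lemma \ref{limitrici}), which says that the position vector $\vec{x}$ is tangent to $\Sigma$ at every point. Thus $\Sigma$ is invariant under the dilation flow $x\mapsto e^{t}x$ and is therefore a Euclidean cone ruled by rays through the origin. Following the flow inward shows the origin lies in the closed image of $\Sigma$, and near the origin a smooth immersed hypersurface that coincides with its own tangent plane there must be a piece of a hyperplane through the origin; the cone structure then forces all of $\Sigma$ to lie in that hyperplane. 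Since $u=u(|x|^2)$ is rotationally symmetric, reflection across any hyperplane through the origin is an isometry of $(\mathbb{B}_R^{n+1},\bar{g})$, so such a hyperplane slice is totally geodesic, and intersecting it with the ball identifies $\Sigma$ with a totally geodesic disc $D^n$ through the center.

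The step I expect to be the main obstacle is this last one: arguing rigorously that a smooth compact immersed hypersurface with identically vanishing support function is planar rather than a genuine singular cone. I would therefore be careful to use smoothness of the immersion at the limiting vertex (so that the tangent cone at the origin is a hyperplane) rather than appealing to minimality, which has already been consumed in setting up the elliptic problem.
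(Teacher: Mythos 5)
Your proof is correct, and its analytic core --- combining Lemma \ref{limitrici} (so that $q\leq 0$), Lemma \ref{solsuport} (so that $L[v]=0$ with $v=0$ on $\partial\Sigma$, the Dirichlet condition coming from the free-boundary hypothesis) and Lemma \ref{autovposi} (strict positivity of $\lambda_1$) to force $v=\bar{g}(\vec{x},\bar{N})\equiv 0$ --- is exactly the paper's argument. Where you diverge is the endgame. The paper converts $\left\langle \vec{x},N\right\rangle\equiv 0$ into Euclidean minimality of $\Sigma$ via the conformal transformation law $\bar{k}_i=e^{-h}\left(k_i-2u'(|x|^2)\left\langle \vec{x},N\right\rangle\right)$ for the principal curvatures (citing \cite{Lop}), and then asserts that a Euclidean-minimal hypersurface with vanishing support function is a flat disc through the origin. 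You instead work directly with the dilation flow: tangency of $\vec{x}$ makes $\Sigma$ a union of radial segments, compactness forces a point $p_0\in\Sigma$ with $x(p_0)=\vec{0}$, smoothness of the immersion at that vertex plus degree-one homogeneity of the local graph function over $T_{p_0}\Sigma$ annihilates the graph, and the radial segments propagate planarity to all of (connected) $\Sigma$; finally the reflection symmetry of $\bar{g}=e^{2u(|x|^2)}\left\langle , \right\rangle$ across the hyperplane shows the slice is totally geodesic for $\bar{g}$. Your route buys a self-contained justification of the very last step, which the paper leaves essentially as an assertion (and which, as you observe, needs the cone-with-smooth-vertex analysis in any case), at the cost of not recording the intermediate fact --- reused later in the annulus theorems --- that $\Sigma$ is also minimal for the flat metric. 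Both are valid; just be explicit that the ``closed image'' claim yields an actual preimage of the origin (properness/compactness of the immersed compact manifold), and that connectedness of $\Sigma$ is used when the hyperplane is propagated outward along the rays.
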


\begin{proof} Consider the function $v=\gb(\x,\nb)$ defined on $\Sigma$, where $\nb$ is the normal vector to $\Sigma$. Considerer also the operator $L:= \Delta + \left|A\right|^2 + q$ where,
$$
q = \bar{R}ic(\bar{N},\bar{N}) + 4ne^{-2h}[u''(\left|x\right|^2)\left|x\right|^2+u'(\left|x\right|^2)].
$$

The $\textbf{Lemma \ref{limitrici}}$ ensures that $q\leq0$ and by hypothesis $(\ref{gap0001})$, the $\textbf{Lemma \ref{autovposi}}$ ensures that the first eigenvalue of the operator $L:= \Delta + \left|A\right|^2 + q $ for the problem $L[v_1]=-\lambda_1v_1$, with $v_1=0$ on the boundary $\partial \Sigma$, is strictly positive. Since in this case we have $L[v]=0$, it follows that $v\equiv0$ in $\Sigma$.  Thus, $0\equiv v =\bar{g}(\vec{x},\bar{N})=e^{2h}\left\langle \vec{x},\bar{N}\right\rangle$, and therefore $\left\langle \vec{x},\bar{N}\right\rangle \equiv 0$. The principal curvatures $k_i$ and $\bar{k}_i$, $i=1,...,n$ of $\Sigma$ with respect to the canonical metric $\left\langle, \right\rangle$ of $\R^{n+1}$ and $\bar{g}$ of $B_{\bar{R}}$ respectively, are related by the equation (see \textbf{Lemma} 10.1.1, \cite{Lop}),

$$\bar{k}_i=\frac{1}{e^h}\left(k_i-2u'(\left|\vec{x}\right|^2)\left\langle \vec{x},N\right\rangle\right)\quad \quad \quad i=1,...,n$$
where $N$ denotes the normal vector to $\Sigma$ in the metric $\left\langle , \right\rangle$ given by $N=e^h\bar{N}$. Since $\Sigma $ is minimal in the metric $\bar{g}$ we should have, 

\begin{eqnarray*}
0 = \sum_{i=1}^{n} \bar{k}_i &=& \sum_{i=1}^{n} \frac{1}{e^h}\left(k_i-2u'(\left|\vec{x}\right|^2)\left\langle \vec{x},N\right\rangle\right)\\
                             &=& \sum_{i=1}^{n} \frac{1}{e^h}\left(k_i-2u'(\left|\vec{x}\right|^2)e^h\left\langle \vec{x},\bar{N} \right\rangle\right) = \sum_{i=1}^{n} \frac{1}{e^h}k_i
\end{eqnarray*}
is that, $\sum_{i=1}^{n} k_i = 0$ and therefore $\Sigma$ is also a minimal hypersurface in $\R^{n+1}$ satisfying $\left\langle\vec{x},N \right\rangle\equiv 0 $. So, $\Sigma$ is a totally geodesic disc passing through the origin.

\end{proof}

Based on \textbf{remark \ref{obsteo}.}, we also have the following theorem, which proof is analogous to the proof of the above result.

\begin{theorem}\label{teopringaphad2} Let $\Sigma^n \hookrightarrow B_{\bar{r}_0} \subset B_{\bar{R}}$ be a free boundary minimal hypersurface, which satisfies 

\begin{equation}\label{gap0002}
\left|A\right|^2 \leq \frac{(n-2)^2}{4\bar{r}^2} 
\end{equation}
at every point  $x\in\Sigma\setminus \vec{0}$ where $\bar{r}=\bar{r}(x)$  is the distance in  $B_{\bar{R}}$ from a point $x \in \Sigma$ to the point $\vec{0}$. Then, $\Sigma$ is a totally geodesic disc through the origin.
\end{theorem}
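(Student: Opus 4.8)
The plan is to run the proof of Theorem~\ref{teopringaphad} essentially verbatim, with the single change that the eigenvalue estimate furnished there by Lemma~\ref{autovposi} (the $\gamma=0$ case) is replaced by the estimate recorded in Remark~\ref{obsteo} (the $\gamma=2$ case). Accordingly, I would again take the conformal support function $v=\gb(\x,\nb)$ on $\Sigma$, where $\nb$ is the unit normal in the metric $\gb$, and the Schr\"odinger-type operator $L:=\Delta+\left|A\right|^2+q$ with
\[
q=\bar{R}ic(\nb,\nb)+4ne^{-2h}\big[u''(\left|x\right|^2)\left|x\right|^2+u'(\left|x\right|^2)\big].
\]
The whole argument is driven by showing that this $v$ both solves $L[v]=0$ and, because the relevant first eigenvalue is positive, must therefore vanish identically.

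First I would invoke Lemma~\ref{solsuport}, which shows that $v$ satisfies $L[v]=0$ on $\Sigma$ with $v=0$ on $\partial\Sigma$; note that the boundary vanishing uses precisely the free-boundary condition, and that $v$ vanishes automatically at $\vec{0}$ since $\x=0$ there. Next, Lemma~\ref{limitrici} gives $q\leq 0$, using hypotheses $i)$ and $ii)$ on $u$. The crucial step is then to show that the first Dirichlet eigenvalue $\lambda_1$ of $L$ is strictly positive: here I would apply Remark~\ref{obsteo}, which under the curvature bound~(\ref{gap0002}) yields $\lambda_1>0$. Concretely, arguing by contradiction with a positive first eigenfunction $v_1$ vanishing on $\partial\Sigma$, integration by parts together with $\lambda_1+q\leq 0$ gives $\int_\Sigma\left|\nabla v_1\right|^2\leq\int_\Sigma\left|A\right|^2 v_1^2\leq \tfrac{(n-2)^2}{4}\int_\Sigma \bar r^{-2}v_1^2$, while the weighted inequality~(\ref{MBV2}) with $\gamma=2$ (its boundary term dropping out because $v_1|_{\partial\Sigma}=0$) gives the reverse inequality $\tfrac{(n-2)^2}{4}\int_\Sigma \bar r^{-2}v_1^2\leq \int_\Sigma\left|\nabla v_1\right|^2$. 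The two chains force equality, and the rigidity clause of Theorem~\ref{MBV} then forces $v_1\equiv 0$, a contradiction. With $\lambda_1>0$ established and $L[v]=0$, I conclude $v\equiv 0$.

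The final step is purely geometric and identical to that of Theorem~\ref{teopringaphad}: from $v\equiv 0$ one gets $e^{2h}\left\langle \x,\nb\right\rangle\equiv 0$, hence $\left\langle \x,N\right\rangle\equiv 0$ for the Euclidean normal $N=e^{h}\nb$. Plugging this into the conformal relation between principal curvatures $\bar k_i=e^{-h}\big(k_i-2u'(\left|x\right|^2)\left\langle\x,N\right\rangle\big)$ and using $\sum_i\bar k_i=0$ shows $\sum_i k_i=0$, so $\Sigma$ is minimal in $\R^{n+1}$ as well and satisfies $\left\langle\x,N\right\rangle\equiv 0$; this pins it down as a totally geodesic disc through the origin.

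I expect the main obstacle to be precisely the point that distinguishes this statement from Theorem~\ref{teopringaphad}: the bound~(\ref{gap0002}) only controls the \emph{growth} of $\left|A\right|^2$ and allows it to blow up like $\bar r^{-2}$ at $\vec{0}$, so the uniform estimate used before is unavailable. The delicate matching is that the singular weight $\bar r^{-2}$ and the sharp constant $(n-2)^2/4$ on the curvature side must coincide exactly with those produced by the $\gamma=2$ version of~(\ref{MBV2}), which is only the correct form when $0\leq \gamma=2<n$, i.e.\ when $n\geq 3$. Care is therefore needed in ensuring the weighted integrals converge near the origin and that the equality/rigidity case is genuinely attained; but since all of this is packaged into Remark~\ref{obsteo}, which I am entitled to assume, the remaining argument is a routine transcription of the proof of Theorem~\ref{teopringaphad}.
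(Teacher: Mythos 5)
Your proposal is correct and follows exactly the route the paper intends: the paper states that the proof of Theorem~\ref{teopringaphad2} is ``analogous to the proof of the above result,'' meaning one reruns the argument of Theorem~\ref{teopringaphad} with the eigenvalue positivity supplied by Remark~\ref{obsteo} (the $\gamma=2$, weight $\bar r^{-2}$ case of inequality~(\ref{MBV2})) in place of Lemma~\ref{autovposi}, together with Lemma~\ref{solsuport}, Lemma~\ref{limitrici}, and the same conformal principal-curvature computation at the end. Your write-up fills in precisely those details, so it matches the paper's proof.
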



\section{GAP Results for $\left|A\right|^2$ of free boundary minimal hypersurfaces in annular domain}

For $r_{_0} \leq \infty$ consider $B_{\bar{r}_{_0}}=(\B_{r_{_0}}^{n+1},\bar{g})$, where again, $\bar{g}=e^{2h}\left\langle ,\right\rangle$, with $h(x)=u(\left|x\right|^2)$ and $u$ under the conditions of \textbf{Lema \ref{condhad}}. For $r_1<r_2<r_{_0}$, define the $(n+1)-$dimensional annulus $\mathcal{A}(\bar{r}_1,\bar{r}_2):= B_{\bar{r}_2} \setminus B_{\bar{r}_1}$. When we want to emphasize that the metric $\bar{g}$ coincides with the canonical metric (that is, for $u\equiv0$) we write $\mathcal{A}(r_1,r_2)$ instead of $\anelconf$, unless otherwise stated. 

\begin{theorem}\label{teoprin1cap3} Let $\Sigma^n \hookrightarrow \mathcal{A}(\bar{r}_1,\bar{r}_2) $ be a immersed free boundary minimal hypersurface. Suppose that,

\begin{equation}\label{gapanel}
\left|A\right|^2 \leq \frac{n^2}{4\bar{r}_2^{2}}
\end{equation}
then, $\Sigma$ is tangent to the position vector field and furthermore the boundary $\partial \Sigma$ intersects the two connected components of the boundary $\partial \mathcal{A}(\bar{r}_1,\bar{r}_2)$.
\end{theorem}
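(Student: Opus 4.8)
The plan is to combine the eigenvalue machinery already developed for the ball with a geometric flow argument that exploits the annular structure. The tangency claim is equivalent to the vanishing of $v := \bar{g}(\vec{x}, \bar{N})$, so I would first show $v \equiv 0$. By Lemma \ref{solsuport} this $v$ satisfies $L[v] = 0$ on $\Sigma$, where $L = \Delta + |A|^2 + q$ with $q = \bar{R}ic(\bar{N}, \bar{N}) + 4ne^{-2h}[u''(|x|^2)|x|^2 + u'(|x|^2)]$, and by Lemma \ref{limitrici} we have $q \leq 0$. The free boundary condition holds on both boundary spheres $\partial B_{\bar{r}_1}$ and $\partial B_{\bar{r}_2}$: there the conormal $\nu$ is parallel to the radial field $\vec{x}$ while $\bar{g}(\nu, \bar{N}) = 0$, so $v = \bar{g}(\vec{x}, \bar{N}) = 0$. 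Thus $v = 0$ on all of $\partial\Sigma$, regardless of which component a boundary point sits on.

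Next I would invoke Lemma \ref{autovposi} with $\bar{r}_0 = \bar{r}_2$. This is legitimate because $\mathcal{A}(\bar{r}_1, \bar{r}_2) = B_{\bar{r}_2} \setminus B_{\bar{r}_1} \subset B_{\bar{r}_2}$, so that $\Sigma \hookrightarrow B_{\bar{r}_2}$, the distance to the origin obeys $\bar{r} \leq \bar{r}_2$ on $\Sigma$, and the hypothesis $|A|^2 \leq n^2/(4\bar{r}_2^2)$ is exactly the gap inequality (\ref{gaphada}) for the ball $B_{\bar{r}_2}$. The lemma then forces the first Dirichlet eigenvalue of $L$ to be strictly positive, so the only solution of $L[v] = 0$ with $v|_{\partial\Sigma} = 0$ is $v \equiv 0$. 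Hence $\bar{g}(\vec{x}, \bar{N}) \equiv 0$, i.e. $\vec{x}$ is everywhere tangent to $\Sigma$, which is the first assertion.

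For the boundary statement I would flow along the position field. Since $\vec{x}$ is tangent to $\Sigma$, the Euclidean integral curves $t \mapsto e^t p$ through an interior point $p \in \Sigma$ remain in $\Sigma$ until they meet $\partial\Sigma$; moreover $\bar{r} = \int_0^{|x|} e^{u(s^2)}\,ds$ is a strictly increasing function of the Euclidean radius $|x|$, which increases strictly along the curve. Because $\Sigma$ is compact and every boundary point lies on one of the two spheres $\{|x| = r_1\}$, $\{|x| = r_2\}$, the forward curve reaches $\partial\Sigma$ at a point of radius strictly larger than $|p| > r_1$, hence on the outer sphere $\partial B_{\bar{r}_2}$, while the backward curve reaches $\partial\Sigma$ at a point of radius strictly smaller than $|p| < r_2$, hence on the inner sphere $\partial B_{\bar{r}_1}$. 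Therefore $\partial\Sigma$ meets both connected components of $\partial\mathcal{A}(\bar{r}_1, \bar{r}_2)$.

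The eigenvalue step is a direct transcription of Lemma \ref{autovposi} and poses no difficulty. I expect the main obstacle to be the boundary statement: one must argue cleanly that the radial integral curve through an interior point genuinely exits through each sphere and cannot be trapped or leave through a single component. This rests on the strict monotonicity of $\bar{r}$ in the Euclidean radius, the compactness of $\Sigma$, and the fact that tangency keeps the curve in $\Sigma$ up to the boundary; making these points rigorous is where the real work lies.
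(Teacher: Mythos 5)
Your proposal is correct and matches the paper's proof: the tangency claim is obtained exactly as in Theorem \ref{teopringaphad}, by applying Lemma \ref{autovposi} with $\bar{r}_0=\bar{r}_2$ to the function $v=\bar{g}(\vec{x},\bar{N})$ of Lemma \ref{solsuport}, which vanishes on both boundary spheres by the free boundary condition. For the boundary statement the paper argues by contradiction --- if $\partial \Sigma$ lay in a single component, $f(x)=\left|x\right|^2$ would attain an interior extremum on $\Sigma$ where $\vec{x}$ is normal, contradicting $\left\langle \vec{x},N\right\rangle\equiv 0$ --- which is the same observation underlying your radial-flow argument, merely phrased via critical points of $f$ rather than integral curves of $\vec{x}$.
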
 

\begin{proof} The proof follows in analogous way to the \textbf{Theorem \ref{teopringaphad}}. We then conclude that the function $v=\bar{g}(\vec{x},\bar{N})$ must be identically null on $\Sigma$, which forces $\Sigma$ to be also a minimal hypersurface of free boundary in the domain $(\B_{r_2}^{n+1} \setminus \B_{r_1}^{n+1},\left\langle , \right\rangle)$. The difference here is that the fact of $\Sigma$ does not pass through the origin does not allow us to conclude that the hypersurface is totally geodesic. However, as we saw in the \textbf{Theorem \ref{teopringaphad}.}, $\bar{g}(\vec{x},\bar{N}) \equiv 0 $ implies that $\left\langle \vec{x},N \right\rangle \equiv 0$ when we consider $\Sigma$ as a hypersurface of $\R^{n+1}$ and thus, the boundary $\partial \Sigma$ necessarily intersects each of the connected components of the boundary $\partial \mathcal{A}(\bar{r}_1,\bar{r}_2)$, otherwise $\partial \Sigma$ would be in only one connected component of $\partial \mathcal{A}(\bar{r}_1,\bar{r}_2)$ and the function $f(x)=\left|x\right|^2$ restricted to $\Sigma$ would have a minimum point or a local maximum point in some point $x_0 \in \Sigma$, which implies that $\nabla f (x_0)=\vec{x}_0$ must be in the direction of the normal vector $N$ in $x_0$ and then $\left\langle \vec{x_0},N \right\rangle \neq 0$, which does not occur.
\end{proof}

The above theorem says that a minimal hypersurface $\Sigma^n \hookrightarrow \mathcal{A}(\bar{r}_1,\bar{r}_2)$ with free boundary satisfying the condition $(\ref{gapanel})$, necessarily has its support function  $v=\bar{g}(\vec{x},\bar{N})$ identically zero, and furthermore, the boundary $\partial \Sigma$ should intersect each of the connected components of $\partial \mathcal{A}$. Note that in the specific case of Euclidean space it is not possible to say immediately that such a hypersurface is totally geodesic, since for dimension $n\geq 3 $ the Euclidean space $\R^{n+1}$ admits minimal hypersurfaces not totally geodesic as some specific types of cones, 
whose portion that intersects the annulus $ \mathcal {A} (r_1, r_2) $ forms a minimal hypersurface with free boundary in $ \mathcal {A} (r_1, r_2)$. Note that such hypersurfaces have a singularity at the origin $\vec{0}$, so we must have $\left|A\right|^2(p)$ arbitrarily large when $p$ approaches to $\vec{0}$. That is why it is natural to expect that some restriction on the minor radius $r_1$ is necessary to characterize hypersurfaces satisfying $(\ref{gapanel})$ as totally geodesic. This is what we will do in the next sections.

\subsection{The case of a Euclidean annulus}

Let $C_{\Gamma}:=\{ \lambda y \ ; y \in \Gamma^{n-1}, \lambda\in (0,\infty)\}$ be a cone in $\R^{n+1}$ with vertex at origin. We refer to $C_{\Gamma}$ as the cone over ${\Gamma}$ for mean that $C_{\Gamma}$ intersects the sphere $\S^n$ along of the surface $\Gamma$. For our purposes we consider that $\Gamma^{n-1}$ is a closed orientable hypersurface on $\S^{n}$. Note that the support function $v=\left\langle \vec{x},N\right\rangle$ is such that $v\equiv 0$ on $C_{\Gamma}$, and if  $\Sigma^n \subset \R^{n+1}$ is a hypersurface which has the support function identically null then $\Sigma$ is contained in some cone $C_{\Lambda}$ for some hypersurface $\Lambda^{n-1} \subset \S^n$.

\begin{lemma}\label{conemin} $C_{\Gamma}$ is a minimal hypersurface in $\R^{n+1}$ if and only if $\Gamma$ is a minimal hypersurface in
 $\S^n$.  
\end{lemma}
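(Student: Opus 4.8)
The plan is to parametrize the cone explicitly and read off its mean curvature in terms of that of $\Gamma$. Fix a local parametrization $y\colon U\to\Gamma\subset\S^n$ with coordinates $(u^1,\dots,u^{n-1})$, and parametrize the regular part of the cone by $X(\lambda,u)=\lambda\,y(u)$ with $\lambda>0$. First I would compute the induced Euclidean metric: since $X_\lambda=y$ and $X_{u^i}=\lambda\,y_{u^i}$, and since $\langle y,y\rangle=1$ forces $\langle y,y_{u^i}\rangle=0$, the metric splits as $g_{\lambda\lambda}=1$, $g_{\lambda i}=0$, $g_{ij}=\lambda^2 h_{ij}$, where $h_{ij}=\langle y_{u^i},y_{u^j}\rangle$ is the metric of $\Gamma$ in $\S^n$. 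Thus $C_\Gamma$ is the metric cone $d\lambda^2+\lambda^2 h$, with inverse metric $g^{\lambda\lambda}=1$, $g^{\lambda i}=0$, $g^{ij}=\lambda^{-2}h^{ij}$.

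Next I would identify the unit normal and the second fundamental form. A unit vector orthogonal to $X_\lambda=y$ and to all $X_{u^i}=\lambda y_{u^i}$ must lie in $T_y\S^n=y^\perp$ and be orthogonal to $T_y\Gamma$; hence it is precisely the unit normal $\nu(u)$ of $\Gamma$ inside $\S^n$, which is independent of $\lambda$. Taking $N=\nu$ and using that the Euclidean connection is flat, the second fundamental form is $A_{ab}=\langle X_{ab},N\rangle$. From $X_{\lambda\lambda}=0$ and $X_{\lambda u^i}=y_{u^i}$ (which is tangent to $\Gamma$, hence orthogonal to $\nu$) I get $A_{\lambda\lambda}=A_{\lambda i}=0$, while $A_{ij}=\lambda\,\langle y_{u^i u^j},\nu\rangle$. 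The key point is to recognize this last quantity as the second fundamental form $A^\Gamma_{ij}$ of $\Gamma$ in $\S^n$: decomposing the Euclidean second derivative as $y_{u^i u^j}=\bar\nabla^{\S^n}_{y_{u^i}}y_{u^j}-h_{ij}\,y$, the term $-h_{ij}\,y$ is normal to $\S^n$ and hence orthogonal to $\nu\in y^\perp$, so that $\langle y_{u^i u^j},\nu\rangle=\langle\bar\nabla^{\S^n}_{y_{u^i}}y_{u^j},\nu\rangle=A^\Gamma_{ij}$.

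Finally I would take the trace. Since the $\lambda$-row of $A$ vanishes, $H_{C_\Gamma}=g^{ab}A_{ab}=g^{ij}A_{ij}=\lambda^{-2}h^{ij}\cdot\lambda\,A^\Gamma_{ij}=\lambda^{-1}H_\Gamma$, so the mean curvature of the cone at $\lambda y$ equals $\lambda^{-1}$ times the mean curvature of $\Gamma$ at $y$. Consequently $H_{C_\Gamma}\equiv 0$ on $C_\Gamma\setminus\{\vec{0}\}$ if and only if $H_\Gamma\equiv 0$ on $\Gamma$, which is exactly the asserted equivalence. The only point needing care, more a bookkeeping subtlety than a genuine difficulty, is the clean separation of the second fundamental form of $\Gamma$ in $\S^n$ from the extrinsic curvature of $\S^n$ in $\R^{n+1}$; this is precisely what the observation that the position vector $y$ is orthogonal to $\nu$ accomplishes. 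One should also note that minimality is understood on the regular part $C_\Gamma\setminus\{\vec{0}\}$, since the cone is singular at the vertex.
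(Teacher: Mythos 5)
Your proof is correct. It reaches the same geometric heart as the paper's argument --- the radial direction contributes nothing to the second fundamental form, and the angular part reproduces the second fundamental form of $\Gamma$ in $\S^n$ because the normal $\nu$ of the cone is orthogonal to the position vector $y$ --- but the execution is genuinely different. The paper works frame-by-frame at a point $p$ with $|p|=1$: it differentiates the identity $\langle \vec{x},N\rangle\equiv 0$ along $\vec{x}$ to kill the radial diagonal entry $\langle -\bar\nabla_{E_n}N,E_n\rangle$, then identifies the remaining trace over $E_1,\dots,E_{n-1}$ with the mean curvature of $\Gamma$ in $\S^n$, and dismisses the converse with ``the reciprocal is done in an analogous way.'' You instead parametrize the cone as $X(\lambda,u)=\lambda y(u)$, exhibit the warped-product metric $d\lambda^2+\lambda^2h$, and compute the full scaling law $H_{C_\Gamma}(\lambda y)=\lambda^{-1}H_\Gamma(y)$. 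This buys you two things the paper's write-up leaves implicit: the equivalence holds at every point of $C_\Gamma\setminus\{\vec 0\}$ rather than only on $C_\Gamma\cap\S^n$ (the paper tacitly relies on the homogeneity of the cone to propagate minimality off the unit sphere), and both implications drop out of a single identity rather than requiring a separate ``analogous'' argument. Your computation is also the natural precursor to the scaling identity $|A|^2(q)=|\vec q|^{-2}|A_1|^2(p)$ that the paper needs later (Lemmas \ref{segform01} and \ref{segform02}), so nothing is lost and a little is gained.
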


\begin{proof} Let $\bar{\nabla}$ the connection of the $\R^{n+1}$ and N the normal field to $C_{\Gamma}$. The connection of $C_{\Gamma}$ is given by $\nabla_XY=(\bar{\nabla}_XY)^T$ for any X and Y fields tangents to $C_{\Gamma}$. The connection of $\Gamma$ as submanifold of $\S^n$ is given by $\nabla^{\Gamma}_ZW=(\nabla_ZW)^T$ for any Z and W tangents fields to $\Gamma$, seeing $\Gamma$ as a submanifold of $\S^n$. Let $p\in C_{\Gamma} \cap \Gamma^{n-1}$, then $\left|\vec{p}\right|=1$ and since $\vec{x} \in T_x C_{\Gamma}$ $\forall x \in C_{\Gamma}$, we have that $\vec{p} \in T_p C_{\Gamma}$. Now consider $E_n=\vec{p}$ and complete to a orthonormal basis $\{E_1,...,E_{n-1},E_n\}$ of $T_pC_{\Gamma}$. If $C_{\Gamma}$ is minimal in $\R^{n+1}$ then we should have,

\begin{equation}\label{curvmedcone}
\sumk \left\langle - \bar{\nabla}_{E_k}N, E_k \right\rangle(p) = 0.
\end{equation}

On the other hand,

$$
\left\langle \vec{x},N\right\rangle \equiv 0 \Rightarrow \vec{x}\left\langle\vec{x},N \right\rangle = 0 \Rightarrow \left\langle \bar{\nabla}_{_{\vec{x}}} \vec{x},N\right\rangle + \left\langle \vec{x},\bar{\nabla}_{_{\vec{x}}} N\right\rangle = 0  \Rightarrow \left\langle - \bar{\nabla}_{\vec{x}}N,\vec{x}\right\rangle=0,
$$

since $\left\langle \bar{\nabla}_{_{\vec{x}}} \vec{x},N\right\rangle = \left\langle\vec{x},N\right\rangle = 0$. In particular, for $\vec{x}=p$ we should have,

\begin{equation}\label{posidireprin}
\left\langle - \bar{\nabla}_{\vec{p}}N,\vec{p}\right\rangle=0 \Rightarrow \left\langle - \bar{\nabla}_{E_n} N, E_n \right\rangle =0.
\end{equation}

In this way, the equation $(\ref{curvmedcone})$ becomes, 

\begin{eqnarray}\label{curvmedcone2}
0 &=& \sumk \left\langle - \bar{\nabla}_{E_k}N, E_k \right\rangle(p) = \sum_{k=1}^{n-1} \left\langle - \bar{\nabla}_{E_k}N, E_k \right\rangle(p) + \left\langle - \bar{\nabla}_{E_n}N, E_n \right\rangle(p) \nonumber \\
  &=& \sum_{k=1}^{n-1} \left\langle - \bar{\nabla}_{E_k}N, E_k \right\rangle(p).
\end{eqnarray}

Since $p\in \S^n$, $\vec{p}$ is orthogonal to $T_p\S^n$ and being $\left\langle \vec{p},N\right\rangle=0$ it follows that $N\in T_p \S^n$. Then, $N$ is normal to $\Gamma^{n-1}$ as submanifold of $\S^n$. Being $\{E_1,...,E_{n-1}\}$ a basis for $T_p \Gamma$, the equation $(\ref{curvmedcone2})$ provides,

$$
\sum_{k=1}^{n-1} \left\langle - \nabla_{E_k} N, E_k \right\rangle(p) = \sum_{k=1}^{n-1} \left\langle - (\bar{\nabla}_{E_k} N)^T, E_k \right\rangle(p) =\sum_{k=1}^{n-1} \left\langle - \bar{\nabla}_{E_k} N, E_k \right\rangle(p)=0
$$

which says that $\Gamma^{n-1}$ is a minimal submanifold of  $\S^n$. The reciprocal is done in an analogous way. 

\end{proof}

\begin{example} For $n \geq 3$, consider the Clifford torus $\mathbb{T}_{m,n}:=\S_{\lambda_1}^{^m} \times \S_{\lambda_2}^{^{(n-1)-m}}$ where $\lambda_1=\sqrt{\frac{m}{n-1}}$, $\lambda_2 = \sqrt{\frac{(n-1)-m}{n-1}}$ and $1 \leq m \leq n-2$. Since $\mathbb{T}_{m,n}$ is a minimal  hypersurface in \ $\S^n$, the cone \ $C_{\mathbb{T}_{m,n}}=\{ \lambda y \ ; y \in \mathbb{T}_{m,n}, \lambda\in (0,\infty)\}$ is a minimal hypersurface in the Euclidean space $\R^{n+1}$.
\end{example}

Given $\lambda > 0 $ consider the hypersurface $\Gamma_{\lambda}:=C_{\Gamma} \cap \S_{\lambda}^n$ where $\S_{\lambda}^n$ denotes the sphere of radius $\lambda$. Note that $\Gamma_{\lambda}$ is obtained from  $\Gamma^{n-1}\subset \S^n$ by a homothety, that is, every point $x_0 \in \Gamma_{\lambda}$ is such that there exists a point  $p \in \Gamma^{n-1}$ in such a way that $x_0 = \lambda p$. Let $\left|A\right|^2$ and $\left|A_{_{\lambda}}\right|^2$ denoting the square of the second fundamental form of $C_{\Gamma}$ as the hypersurface of $\R^{n+1}$ and of $\Gamma_\lambda$ as the hypersurface of $\S^n_{\lambda}$, respectively.

\begin{lemma}\label{segform01} Let $\lambda > 0$ and consider $\Gamma_{\lambda} \hookrightarrow \S^{n}_{\lambda}$. We have, 

$$ 
\left|A\right|^2(q) = \left|A_{_{\lambda}}\right|^2(q)   
$$

\noindent for all  $q \in C_{\Gamma} \cap \Gamma_{\lambda}$. 
\end{lemma}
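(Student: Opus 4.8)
The plan is to fix a point $q \in C_{\Gamma}\cap\Gamma_{\lambda}$, decompose the tangent space $T_qC_{\Gamma}$ into the radial line $\R\vec{x}$ together with $T_q\Gamma_{\lambda}$, and show two things: (i) the radial direction carries no second fundamental form of the cone, so all entries of $A$ involving it vanish; and (ii) the remaining \emph{tangential block} of $A$ agrees entry by entry with the second fundamental form of $\Gamma_{\lambda}$ seen inside $\S^n_{\lambda}$. Summing the squares of the entries then gives $\left|A\right|^2(q)=\left|A_{_{\lambda}}\right|^2(q)$ directly, with no scaling factors to track.

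First I would record the two facts already available for the cone: the Euclidean identity $\bar{\nabla}_X\vec{x}=X$ for every $X$, and the vanishing of the support function $\left\langle\vec{x},N\right\rangle\equiv0$ on $C_{\Gamma}$ (as used in Lemma \ref{conemin}). Choosing an orthonormal frame $\{E_1,\dots,E_{n-1},E_n\}$ of $T_qC_{\Gamma}$ with $E_n=\vec{x}/\left|\vec{x}\right|$ and $\{E_1,\dots,E_{n-1}\}$ an orthonormal basis of $T_q\Gamma_{\lambda}$, I would compute $A(E_n,E_n)=\left\langle\bar{\nabla}_{\vec{x}}\vec{x},N\right\rangle/\left|\vec{x}\right|^2=\left\langle\vec{x},N\right\rangle/\left|\vec{x}\right|^2=0$ and, for $k\leq n-1$, $A(E_k,E_n)=\left\langle\bar{\nabla}_{E_k}\vec{x},N\right\rangle/\left|\vec{x}\right|=\left\langle E_k,N\right\rangle/\left|\vec{x}\right|=0$. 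Hence every entry of $A$ involving the radial direction vanishes and $\left|A\right|^2(q)=\sum_{i,j=1}^{n-1}A(E_i,E_j)^2$.

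Next I would treat the purely tangential entries. Since $\left\langle\vec{x},N\right\rangle=0$ at $q$, the normal $N$ lies in $T_q\S^n_{\lambda}$ and is normal to $\Gamma_{\lambda}$ inside $\S^n_{\lambda}$. For $X,Y\in T_q\Gamma_{\lambda}$ I would split the Euclidean derivative as $\bar{\nabla}_XY=\nabla^{\S^n_{\lambda}}_XY+\mathrm{II}^{\S^n_{\lambda}}(X,Y)$, where the sphere's second fundamental form $\mathrm{II}^{\S^n_{\lambda}}(X,Y)=-\left|\vec{x}\right|^{-2}\left\langle X,Y\right\rangle\vec{x}$ is radial. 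Taking the inner product with $N$ and using $\left\langle\vec{x},N\right\rangle=0$ kills the sphere's contribution, leaving $A(X,Y)=\left\langle\nabla^{\S^n_{\lambda}}_XY,N\right\rangle=A_{_{\lambda}}(X,Y)$. Therefore $\left|A\right|^2(q)=\sum_{i,j=1}^{n-1}A(E_i,E_j)^2=\sum_{i,j=1}^{n-1}A_{_{\lambda}}(E_i,E_j)^2=\left|A_{_{\lambda}}\right|^2(q)$, as claimed.

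The only real point to verify carefully is that the round sphere's own second fundamental form is radial and hence orthogonal to $N$; this is exactly what makes the tangential blocks coincide \emph{without} any homothety correction, and it is the geometric heart of the statement. Everything else is the frame bookkeeping of the first two steps. An alternative route would be to compute $\left|A\right|^2$ at the unit-sphere point $p=q/\left|q\right|$ and then invoke the $\lambda^{-2}$ scaling of the second fundamental form under the dilation $x\mapsto\lambda x$ of $\R^{n+1}$, which carries $C_{\Gamma}$ into itself and $(\Gamma,\S^n)$ into $(\Gamma_{\lambda},\S^n_{\lambda})$ in the same way; but the direct frame computation above avoids having to track these factors.
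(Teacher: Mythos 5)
Your proof is correct and follows essentially the same route as the paper: fix an orthonormal frame of $T_qC_{\Gamma}$ with $E_n$ radial, show every entry of $A$ involving $E_n$ vanishes, and identify the remaining $(n-1)\times(n-1)$ block with $A_{\lambda}$ using that $N$ is tangent to $\S^n_{\lambda}$ (because $\left\langle \vec{x},N\right\rangle\equiv0$ on the cone). The only cosmetic difference is that you work with the Gauss formula $A(X,Y)=\left\langle \bar{\nabla}_X Y,N\right\rangle$ and the identity $\bar{\nabla}_X\vec{x}=X$, whereas the paper works with the Weingarten operator $-\bar{\nabla}N$ and observes that $N$ is constant along the rays of the cone; both arguments reduce to the same frame computation.
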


\begin{proof} Let $q \in \Gamma_{\lambda}$. Consider $E_n=\frac{\vec{q}}{\left|\vec{q}\right|}$ and complete to an orthonormal basis $\{E_1,...,E_{n-1},E_n\}$ of $T_q C_{\Gamma}$. As we have saw in $(\ref{posidireprin})$, the fact of $\left\langle \vec{x},N\right\rangle\equiv 0$ on $C_{\Gamma}$ ensures that $\left\langle - \bar{\nabla}_{\vec{x}} N, \vec{x} \right\rangle(q)=0$, therefore for $x=q$, we have $\left\langle - \bar{\nabla}_{\vec{q}} N, \vec{q} \right\rangle(q) = 0$ and multiplying this last expression by $\frac{1}{\left|\vec{q}\right|^2}$, we obtain $\left\langle - \bar{\nabla}_{E_n} N, E_n \right\rangle(q) = 0$. That is, the vector $- \bar{\nabla}_{E_n} N $ is orthogonal to $E_n$ or is null. We affirm that $- \bar{\nabla}_{E_n} N = 0$. Indeed, it is enough to observe that if we consider the curve $\alpha:(-\varepsilon,\varepsilon) \rightarrow C_{\Gamma}$ given by $\alpha(t)=(1+\frac{t}{\left|\vec{q}\right|}) q$ we have $\alpha'(0)=E_n$, as long as the tangent space $T_{\alpha(t)} C_{\Gamma}$ is the same for all $t$, we have that the normal vector N does not varies over $\Sigma$. Thereby, $0=\left\langle A(E_n), E_i \right\rangle(q) = \left\langle E_n,A(E_i)\right\rangle(q)$ for all $i=1,...,n$. Then, we have that

\begin{eqnarray}\label{quasegcon}
\left|A\right|^2(q) &=& \sumk \suml \left\langle A_{_C}(E_l), E_k \right\rangle^2 = \sum_{k=1}^{n-1} \suml \left\langle A_{_C}(E_l), E_k \right\rangle^2 + \suml \left\langle A_{_C}(E_l), E_n \right\rangle^2\nonumber \\
												 &=& \sum_{k=1}^{n-1} \left\{ \sum_{l=1}^{n-1} \left\langle A_{_C}(E_l), E_k \right\rangle^2 + \left\langle A_{_C}(E_n), E_k \right\rangle^2 \right\} = \sum_{k=1}^{n-1} \sum_{l=1}^{n-1} \left\langle A_{_C}(E_l), E_k \right\rangle^2\nonumber\\
												 &=& \sum_{k=1}^{n-1} \sum_{l=1}^{n-1} \left\langle - \bar{\nabla}_{E_l} N , E_k \right\rangle^2.
\end{eqnarray}

On the other hand, $\{E_1,...,E_{n-1}\}$ is an orthonormal basis of $T_{q}\Gamma_{\lambda}$ as submanifold of $\S^n_{\lambda}$, where $N$ is normal to $\Gamma_{\lambda}$ as subvariety of $\S^n_{\lambda}$, therefore,

\begin{eqnarray}\label{quaseggama} 
\left|A_{_{\lambda}}\right|^2(q) &=& \sum_{k=1}^{n-1} \sum_{l=1}^{n-1} \left\langle - \nabla_{E_l} N , E_k \right\rangle^2 = \sum_{k=1}^{n-1} \sum_{l=1}^{n-1} \left\langle - (\bar{\nabla}_{E_l} N)^T , E_k \right\rangle^2 \nonumber \\ 
                                &=&  \sum_{k=1}^{n-1} \sum_{l=1}^{n-1} \left\langle - \bar{\nabla}_{E_l} N , E_k \right\rangle^2,
\end{eqnarray}

\noindent where $\nabla$ denotes the connection of $\S^n$ with respect to the metric $\R^{n+1}$. Comparing the equations $(\ref{quasegcon})$ and $(\ref{quaseggama})$, follows the desired result. 

\end{proof}

\begin{lemma}\label{segform02} Consider $\Gamma^{n-1} \hookrightarrow \S^n$ a closed surface and be $C_{\Gamma}$, the cone in $\R^{n+1}$ over $\Gamma^{n-1}$. Given a $q \in C_{\Gamma}$ define $\lambda=\left|\vec{q}\right|$ and $\Gamma_\lambda=C_{\Gamma} \cap \S_{\lambda}^n$. Thus, for $p=\frac{1}{\lambda}q \in \S^n$ we have, 

\begin{equation}\label{segmudconf}
\left|A_{\lambda}\right|^2(q) = \frac{1}{\lambda^2}\left|A_{_{1}}\right|^2(p)
\end{equation}

\end{lemma}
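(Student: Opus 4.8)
The plan is to reduce everything to the formula for $|A_\lambda|^2$ that was already isolated in the proof of Lemma \ref{segform01}, and then to exploit the scale invariance of the cone $C_\Gamma$. Recall that there one fixes an orthonormal basis $\{E_1,\dots,E_{n-1}\}$ of $T_q\Gamma_\lambda$, completes it with $E_n=\vec{q}/|\vec{q}|$ to an orthonormal basis of $T_qC_\Gamma$, uses $\langle\vec{x},N\rangle\equiv0$ to get $-\bar{\nabla}_{E_n}N=0$, and thereby obtains
\begin{equation*}
|A_\lambda|^2(q)=\sum_{k=1}^{n-1}\sum_{l=1}^{n-1}\big\langle-\bar{\nabla}_{E_l}N,\,E_k\big\rangle^2\Big|_q,
\end{equation*}
where $\bar{\nabla}$ is the flat connection of $\R^{n+1}$ and $N$ is the unit normal to $C_\Gamma$. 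The identical expression, evaluated at $p=\tfrac1\lambda q\in\S^n$ (i.e. the case $\lambda=1$), computes $|A_1|^2(p)$. So the whole statement amounts to tracking how $\langle-\bar{\nabla}_{E_l}N,E_k\rangle$ changes as one moves along the ray from $p$ to $q$.

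First I would record two structural facts about the cone. Because $C_\Gamma$ is invariant under the homothety $\phi_\lambda(x)=\lambda x$ and $\phi_\lambda$ is linear, one has $T_qC_\Gamma=\lambda\,T_pC_\Gamma=T_pC_\Gamma$ as linear subspaces of $\R^{n+1}$; intersecting with the common orthogonal complement of the radial line $\R\,\vec{q}=\R\,\vec{p}$ gives $T_q\Gamma_\lambda=T_p\Gamma_1$. Hence the very same orthonormal frame $\{E_1,\dots,E_{n-1}\}$ may be used at both points. Moreover, as was already observed along the ray in the proof of Lemma \ref{segform01} (where $-\bar{\nabla}_{E_n}N=0$), the unit normal $N$ to $C_\Gamma$ is constant along rays, so $N(\lambda x)=N(x)$; that is, $N$ is homogeneous of degree $0$.

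The key step is then to differentiate this homogeneity relation. Viewing $N$ as an $\R^{n+1}$-valued map and writing $\bar{\nabla}_{E_l}N=dN(E_l)$, differentiating $N(\lambda x)=N(x)$ in a direction $E_l$ tangent to the cone gives $\lambda\,dN_{\lambda x}(E_l)=dN_x(E_l)$, that is,
\begin{equation*}
\bar{\nabla}_{E_l}N\big|_{q}=\frac{1}{\lambda}\,\bar{\nabla}_{E_l}N\big|_{p}.
\end{equation*}
Substituting this into the formula above, pulling the factor $1/\lambda$ out of each inner product and squaring, yields
\begin{equation*}
|A_\lambda|^2(q)=\sum_{k,l=1}^{n-1}\frac{1}{\lambda^2}\big\langle-\bar{\nabla}_{E_l}N,E_k\big\rangle^2\Big|_{p}=\frac{1}{\lambda^2}\,|A_1|^2(p),
\end{equation*}
which is the claim. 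The only genuine point requiring care is the degree-$0$ homogeneity of $N$ and the resulting $1/\lambda$ scaling of its derivative; everything else is the bookkeeping already set up in Lemma \ref{segform01}. As a consistency check, this matches the classical fact that under a homothety of ratio $\lambda$ the principal curvatures scale by $1/\lambda$, so $|A|^2=\sum k_i^2$ scales by $1/\lambda^2$.
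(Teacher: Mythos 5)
Your proof is correct. Note, however, that the paper states Lemma \ref{segform02} \emph{without any proof}, evidently treating it as the standard fact that a homothety of ratio $\lambda$ scales principal curvatures, hence $\left|A\right|$, by $1/\lambda$; so there is no argument in the paper to compare against step by step. Your route --- reusing the expression $\left|A_{\lambda}\right|^2(q)=\sum_{k,l=1}^{n-1}\left\langle -\bar{\nabla}_{E_l}N,E_k\right\rangle^2$ isolated in the proof of Lemma \ref{segform01}, observing that $T_qC_{\Gamma}=T_pC_{\Gamma}$ by dilation invariance so the same frame serves at both points, noting that $N$ is homogeneous of degree $0$ along rays (which is exactly the content of $\bar{\nabla}_{E_n}N=0$ established there), and then differentiating $N(\lambda x)=N(x)$ to obtain $\bar{\nabla}_{E_l}N|_{q}=\frac{1}{\lambda}\bar{\nabla}_{E_l}N|_{p}$ --- is the natural way to fill this gap and is sound. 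The only point worth stating explicitly is that $\{E_1,\dots,E_{n-1}\}$ is orthonormal for the induced metric at both $q$ and $p$, because $\Gamma_{\lambda}$ and $\Gamma_1$ carry the metric induced from $\R^{n+1}$ on the same linear subspace; you use this tacitly when you evaluate the same double sum at the two points and identify the results with $\left|A_{\lambda}\right|^2(q)$ and $\left|A_{1}\right|^2(p)$ respectively.
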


\begin{remark}\label{obsconeminsse} A consequence of the identity $(\ref{segmudconf})$ is that if $\Gamma^{n-1}$ is a minimal hypersurface in $\S^n$, then the cone $C_{\Gamma}$ over $\Gamma^{n-1}$ is a totally geodesic minimal hypersurface in $\R^{n+1}$ if and only if $\Gamma$ is a totally geodesic minimal hypersurface in $\S^n$.  
\end{remark}

By lemmas $(\ref{segform01})$ and $(\ref{segform02})$ if $C_{\Gamma}$ is a cone in $\R^{n+1}$ over some hypersurface of $\S^n$ and $q \in C_{\Gamma}$ then, for $p=\frac{1}{\left|\vec{q}\right|}q$ we have, 

\begin{equation}\label{segform03}
\left|A\right|^2(q) = \frac{1}{\left|\vec{q}\right|^2}\left|A_{_{1}}\right|^2(p)
\end{equation}

This says that the square of the second fundamental form of $C_{\Gamma}$ as hypersurface of $\R^{n+1}$ when calculated on $\Gamma_{\lambda}= C_{\Gamma} \cap \S_{\lambda}^n$ for some $\lambda$, can be compared to the square of the second fundamental form of a hypersurface $\Gamma \hookrightarrow \S^n$ obtained by a homothety $\Gamma=\frac{1}{\lambda}\Gamma_{\lambda}$. In view of the \textbf{Lema \ref{conemin}.}, this comparison becomes useful in the context where we are working if the cone $C_{\Gamma}$ is a minimal hypersurface $\R^{n+1}$ due to the following theorem,


\begin{theorem}[Chern-do Carmo-Kobayashi, \cite{ChCaKo}]\label{ChernCarmoKoba}. Let $\Gamma^{n-1}$ be a closed minimal hypersurface in the unit sphere $\S^{n}$. Assume that its second fundamental form $A_1$ satisfies, 
$$
\left|A_1\right|^2 \leq n-1
$$
then,\\

(1) $\left|A_1\right|^2\equiv 0$ and $\Gamma^{n-1}$ is an equator $\S^{n-1} \subset \S^n$

(2) or $\left|A_1\right|^2 \equiv n-1$ and $\Gamma^{n-1}$ is one of Clifford tori $\mathbb{T}_{_{n,m}}$ 
\end{theorem}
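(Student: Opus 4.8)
The plan is to obtain the dichotomy from the Simons identity together with an integration over the closed manifold, and then to treat the equality case as a separate, genuinely harder, classification problem. We may assume $\Gamma$ is connected, arguing componentwise otherwise. First I would recall the Simons equation for a minimal hypersurface $\Gamma^{n-1}$ of the unit sphere $\S^n$,
\begin{equation*}
\frac{1}{2}\Delta\left|A_1\right|^2 = \left|\nabla A_1\right|^2 + \left|A_1\right|^2\left((n-1)-\left|A_1\right|^2\right),
\end{equation*}
where $\nabla$ and $\Delta$ denote the intrinsic connection and Laplacian of $\Gamma$. This is a pointwise identity produced by commuting covariant derivatives of $A_1$, using minimality ($\sum_i \lambda_i = 0$) and the fact that the ambient curvature is constant equal to $1$; I would regard it as classical and cite it rather than rederive it.

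Next I would integrate over $\Gamma$. As $\Gamma$ is closed, the divergence theorem gives $\int_\Gamma \Delta\left|A_1\right|^2 = 0$, whence
\begin{equation*}
\int_\Gamma \left|\nabla A_1\right|^2 + \int_\Gamma \left|A_1\right|^2\left((n-1)-\left|A_1\right|^2\right) = 0.
\end{equation*}
The hypothesis $\left|A_1\right|^2 \leq n-1$ makes the second integrand nonnegative, and the first is manifestly nonnegative, so each integral vanishes and, by continuity, each integrand vanishes identically. Thus $\nabla A_1 \equiv 0$ and $\left|A_1\right|^2\left((n-1)-\left|A_1\right|^2\right) \equiv 0$. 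Since $\nabla A_1 \equiv 0$ forces $\left|A_1\right|^2$ to be constant, that constant is either $0$ or $n-1$, which is exactly the dichotomy. If $\left|A_1\right|^2 \equiv 0$ then $A_1 \equiv 0$, so $\Gamma$ is totally geodesic and hence an equatorial $\S^{n-1}\subset\S^n$; this is conclusion (1).

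The main obstacle is conclusion (2), the equality case $\left|A_1\right|^2 \equiv n-1$, which carries the real rigidity. Here I would use that $A_1$ is parallel, so its principal curvatures $\lambda_1,\dots,\lambda_{n-1}$ are constant and subject to $\sum_i \lambda_i = 0$ and $\sum_i \lambda_i^2 = n-1$. Parallelism of $A_1$ makes the eigendistributions of $A_1$ parallel subbundles of $T\Gamma$, so the universal cover splits as a Riemannian product by de Rham, each factor being an umbilic round sphere sitting in $\S^n$. The algebraic constraints above then force exactly two distinct principal curvatures, with the multiplicities $m$ and $(n-1)-m$ and the radii $\sqrt{m/(n-1)}$ and $\sqrt{((n-1)-m)/(n-1)}$, so that $\Gamma$ is isometric to the Clifford torus $\S^m_{\lambda_1}\times\S^{(n-1)-m}_{\lambda_2} = \mathbb{T}_{n,m}$. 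Carrying out this eigenvalue bookkeeping and identifying the resulting product with the Clifford torus is the delicate point where the full strength of the Chern--do Carmo--Kobayashi argument is needed, and is the step I would expect to demand the most care.
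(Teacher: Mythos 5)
The paper does not prove this statement: it is quoted from Chern--do Carmo--Kobayashi \cite{ChCaKo} and used as a black box, so there is no in-paper argument to compare against. Judged on its own terms, your outline follows the standard route and its first half is correct and complete: Simons' identity $\tfrac12\Delta|A_1|^2=|\nabla A_1|^2+|A_1|^2\bigl((n-1)-|A_1|^2\bigr)$ for a minimal hypersurface of $\S^n$, integration over the closed manifold, and nonnegativity of both integrands under the pinching hypothesis do yield $\nabla A_1\equiv 0$ together with the dichotomy $|A_1|^2\equiv 0$ or $|A_1|^2\equiv n-1$, and the first alternative gives the equator.

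The gap is in the equality case. You assert that the constraints $\sum_i\lambda_i=0$ and $\sum_i\lambda_i^2=n-1$ ``force exactly two distinct principal curvatures''; they do not. For $n-1\geq 3$ the solution set of these two equations is a sphere of dimension $n-3$ inside the trace-free hyperplane and contains points with all coordinates distinct, so no algebraic bookkeeping at this stage can reduce to two eigenvalues. The mechanism that actually does the work is geometric: once $A_1$ is parallel the eigendistributions are parallel, the de Rham splitting makes every mixed sectional curvature of $\Gamma$ vanish, and the Gauss equation evaluates such a curvature as $1+\lambda\mu$ for eigenvectors belonging to distinct eigenvalues $\lambda\neq\mu$. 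Hence $\lambda\mu=-1$ for every pair of distinct principal curvatures, and only this forces exactly two values $\lambda$ and $-1/\lambda$; minimality then fixes $\lambda^2=q/p$ in terms of the multiplicities $p$ and $q=(n-1)-p$, one checks $|A_1|^2=p\lambda^2+q\lambda^{-2}=n-1$ consistently, and the factors are identified as round spheres of radii $\sqrt{p/(n-1)}$ and $\sqrt{q/(n-1)}$. (One must also still pass from this intrinsic product structure to the statement that the immersion is the standard Clifford embedding, which is the remaining content of \cite{ChCaKo}.) You correctly flag this step as the delicate one, but the specific argument you propose for it would not close as written.
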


\begin{example} For $n \geq 3$, let $C_{\mathbb{T}}$ be a minimal cone on $\R^{n+1}$ over the Clifford tori $\mathbb{T}_{m,n} \hookrightarrow \S^n$. For $r_2=1$ and $r_{_1}^2=\frac{4(n-1)}{n^2}r_{_2}^2$, we consider $\aneleu$ and let $C_{\mathbb{T}}(r_1,r_2)=C_{\mathbb{T}} \cap \mathcal{A}(r_1,r_2)$ be a portion of $C_{\mathbb{T}}$ inside on $\aneleu$. In view of the equation $(\ref{segform03})$, for any point $q \in C_{\mathbb{T}}(r_1,r_2)$ we have,
$$
\left|A\right|^2(q) = \frac{1}{\left|\vec{q}\right|^2}\left|A_{_{1}}\right|^2(p)
$$
But, by the above theorem, we have $\left|A_1\right|^2\equiv n-1$. Than, 
$$
\left|A\right|^2(q) = \frac{n-1}{\left|\vec{q}\right|^2} \leq \frac{n-1}{r_{_1}^2} = \frac{n^2}{4r_{_2}^2} 
$$
is that, 
$$
\left|A\right|^2(q) \leq \frac{n^2}{4r_{_2}^2} \quad \forall q \in C_{\mathbb{T}}(r_1,r_2)
$$
\end{example}

The preceding example says that condition $\left|A\right|^2(q) \leq \frac{n^2}{4r_{_2}^2}$ without any other assumption is not sufficient to characterize a free boundary minimal hypersurface on $\aneleu$ as being totally geodesic like was done on \textbf{Theorem \ref{teopringaphad}}. For this case we need a additional hypothesis which concerns about condition of distancing betwen the rays $r_1$ and $r_2$ of $\aneleu$ as we'll see on \textbf{Corollary \ref{classificacaoaneleu}}. But before that, we'll see a slightly more general case which will be useful to study a free boundary minimal hypersurfaces on $\anelconf$. 

\begin{proposition}\label{classifictroncocone} Let $C_{\Gamma}$ be a minimal cone in $\R^{n+1}$ with vertex on the origin and considered over a closed minimal hypersurface in $\S^n$. Consider $C_{\Gamma}(r_1,r_2)=C_{\Gamma} \cap \mathcal{A}(r_1,r_2)$ the trunk of cone $C_{\Gamma}$ inside of anullus $\mathcal{A}(r_1,r_2)$. Supose that for some constant $a_0$
\begin{equation}\label{gappanelprop}
\left|A\right|^2(q) \leq \frac{n^2}{4 r_{_2}^2} a_0 
\end{equation}
for all $q \in C_{\Gamma}(r_{_1},r_{_2})$. Hence,  \\

$i)$ If,
\begin{equation}\label{condraio}
r_{_1}^2 < \dfrac{4(n-1)}{n^2 a_0}\,r_{_2}^2 
\end{equation} 
than, $C_{\Gamma}(r_{_1},r_{_2})$ is the totaly geodesic anullus. In particular,  if the dimension $n$ satisfies $\frac{n^2}{n-1} \leq \frac{4}{a_{_0}}$ the inequality $(\ref{condraio})$ is always satisfied since $r_1<r_2$. Thus, just condition $(\ref{gappanelprop})$ must be satisfied so that $C_{\Gamma}$ be a totally geodesic disc.\\

$ii)$ if $\frac{n^2}{n-1} > \frac{4}{a_{_0}}$, 
\begin{equation}\label{condraio002}
r_{_1}^2 = \dfrac{4(n-1)}{n^2 a_0}\,r_{_2}^2 
\end{equation} 
and the equality $(\ref{gappanelprop})$ occur at some point  $q \in C_{\Gamma}(r_1,r_2)$, we have that $\Gamma$ is the Clifford Torus  $\mathbb{T}_{m,n}$.
\end{proposition}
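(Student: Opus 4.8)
The plan is to push everything down to the second fundamental form of $\Gamma$ inside the \emph{unit} sphere and then invoke the Chern--do Carmo--Kobayashi rigidity. Since $C_\Gamma$ is a minimal cone, Lemma~\ref{conemin} guarantees that $\Gamma^{n-1}$ is a closed minimal hypersurface of $\S^n$; being closed, $\left|A_1\right|^2$ attains a maximum on $\Gamma$, which I denote $M:=\max_\Gamma \left|A_1\right|^2$. The bridge between the two geometries is the scaling identity $(\ref{segform03})$, namely $\left|A\right|^2(q)=\frac{1}{\left|\vec{q}\right|^2}\left|A_1\right|^2(q/\left|\vec{q}\right|)$. The first step is to extract from the hypothesis $(\ref{gappanelprop})$ a single scalar inequality for $M$: for any $p\in\Gamma$ the point $q=r_1 p$ lies in $C_\Gamma(r_1,r_2)$ with $\left|\vec{q}\right|=r_1$, so $(\ref{segform03})$ gives $\left|A\right|^2(q)=\frac{1}{r_1^2}\left|A_1\right|^2(p)$; substituting into $(\ref{gappanelprop})$ and taking the supremum over $p$ yields
\[
M \le \frac{n^2 a_0 r_1^2}{4 r_2^2}.
\]

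For part $i)$, the assumption $(\ref{condraio})$ inserted into this bound gives $M<n-1$, hence $\left|A_1\right|^2\le M< n-1$ on all of $\Gamma$. Theorem~\ref{ChernCarmoKoba} then forces alternative $(1)$, so $\left|A_1\right|^2\equiv 0$ and $\Gamma$ is an equator, i.e.\ totally geodesic; Remark~\ref{obsconeminsse} upgrades this to $C_\Gamma$ being totally geodesic, whence $C_\Gamma(r_1,r_2)$ is the totally geodesic annulus. The ``in particular'' clause is immediate: $\frac{n^2}{n-1}\le\frac{4}{a_0}$ is equivalent to $\frac{4(n-1)}{n^2 a_0}\ge 1$, so from $r_1<r_2$ one gets $r_1^2<r_2^2\le\frac{4(n-1)}{n^2 a_0}r_2^2$, which is precisely $(\ref{condraio})$.

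For part $ii)$, under $(\ref{condraio002})$ the displayed bound now reads $M\le n-1$. Let $q_0$ be a point of equality in $(\ref{gappanelprop})$ and put $p_0=q_0/\left|\vec{q}_0\right|$. Using $\left|\vec{q}_0\right|\ge r_1$, $\left|A_1\right|^2(p_0)\le M$, and $(\ref{segform03})$, I would chain
\[
\frac{n^2 a_0}{4 r_2^2}=\left|A\right|^2(q_0)=\frac{\left|A_1\right|^2(p_0)}{\left|\vec{q}_0\right|^2}\le\frac{M}{\left|\vec{q}_0\right|^2}\le\frac{M}{r_1^2}\le\frac{n-1}{r_1^2}=\frac{n^2 a_0}{4 r_2^2},
\]
where the final equality is exactly $(\ref{condraio002})$. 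Because the two ends agree, every inequality collapses to an equality; in particular $M=n-1$. Since $M=n-1>0$ rules out the equator alternative, Theorem~\ref{ChernCarmoKoba} gives $\left|A_1\right|^2\equiv n-1$, so $\Gamma$ is a Clifford torus $\mathbb{T}_{m,n}$.

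Everything is elementary once the scaling identity is in hand; the one point demanding care is ensuring the chain of inequalities in part $ii)$ is genuinely forced to degenerate. This is why I anchor both ends of that chain to the same quantity $\frac{n^2 a_0}{4 r_2^2}$, drawing the top from the equality case of $(\ref{gappanelprop})$ and the bottom from the borderline radius relation $(\ref{condraio002})$. The substantive content is imported: it is the rigidity in Chern--do Carmo--Kobayashi that converts the pinching $M\le n-1$ into the exact identification of $\Gamma$, so the expected main obstacle is conceptual rather than computational, namely verifying that the hypotheses are arranged so as to land precisely on the equality threshold of that theorem.
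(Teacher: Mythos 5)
Your proposal is correct and follows essentially the same route as the paper: reduce to $\Gamma\subset\S^n$ via the scaling identity $(\ref{segform03})$ evaluated at radius $r_1$, obtain the pinching $\left|A_1\right|^2\le n-1$ (strict in case $i$), and conclude with the Chern--do Carmo--Kobayashi rigidity together with Remark~\ref{obsconeminsse}. Your explicit equality chain in part $ii)$ is a slightly more careful rendering of the paper's brief claim that equality in $(\ref{gappanelprop})$ propagates to equality in $\left|A_1\right|^2\le n-1$, but the argument is the same.
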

\begin{proof} We consider $\Gamma_{\lambda}=C_{\Gamma}(r_1,r_2) \cap \S_{\lambda}^n$, where $r_1 \leq \lambda \leq r_2$, and we will denote by $A_{\lambda}$ the second fundamental form of respective hypersurfaces like a submanifolds of $\S_{\lambda}^n$. We note that $\Gamma=\frac{1}{\lambda}\Gamma_{\lambda}$. The second fundamental form of $\Gamma$ will be denoted by $A_1$. For each $q \in C_{\Gamma}(r_{_1},r_{_2})$ we choose $p=\frac{1}{\lambda}q \in \S^n$. As a consequence of $(\ref{segform03})$, 
$$
\left|A\right|^2(q) = \frac{1}{\left|q\right|^2}\left|A_{_{1}}\right|^2(p) 
$$
is that,
\begin{equation}\label{segform04}
\left|A_{_{1}}\right|^2(p) = \left|A\right|^2(q) \left|q\right|^2   
\end{equation}
for all $q \in C_{\Gamma}(r_1,r_2)$.\\
$i) (\Rightarrow) $ Thus, by  hypothesis $(\ref{gappanelprop})$ and the condition $(\ref{condraio})$ on the rays $r_1$ and $r_2$ we have, 
$$
\left|A_{_{1}}\right|^2(p) = \left|A\right|^2(q) \left|q\right|^2 \leq \frac{n^2}{4 r_{_2}^2} a_0 \left|q\right|^2 < \frac{n-1}{r_{_1}^2}\left|q\right|^2
$$
is that, 
$$
\left|A_{_{1}}\right|^2(p) < \frac{n-1}{r_{_1}^2}\left|q\right|^2
$$
The inequality above is true for all $q \in C_{\Gamma}(r_1,r_2)$ and $p \in \Gamma$ such that $p=\frac{1}{\lambda}q$, being $r_1 \leq \left|q\right| \leq r_2$, in particular for $\left|q\right|=r_1$ we have,
$$
\left|A_{_{1}}\right|^2(p) < n-1
$$ 
for all $p \in \Gamma$. By \textbf{Theorem \ref{ChernCarmoKoba}.} follows that $\Gamma \hookrightarrow \S^n$ is a totaly geodesic hypersurface, and by \textbf{Remark \ref{obsconeminsse}.}, $C_{\Gamma}(r_1,r_2)$ is a totally geodesic minimal hypersurface.\\ 

$ii) (\Rightarrow )$ Developing the equation $(\ref{segform04})$ and apply the hypothesis $(\ref{gappanelprop})$ and $(\ref{condraio002})$ we have, 
\begin{equation}
\left|A_{_{1}}\right|^2(p) = \left|A\right|^2(q) \left|q\right|^2 \leq \frac{n^2}{4 r_{_2}^2} a_0 \left|q\right|^2 = \frac{n-1}{r_{_1}^2}\left|q\right|^2
\end{equation}
Again, the equation is satifies for all $q \in C_{\Gamma}(r_1,r_2)$ and $p \in \Gamma$ such that $p=\frac{1}{\lambda}q$, being $r_1 \leq \left|q\right| \leq r_2$, in particular for $\left|q\right|=r_1$ we have,
\begin{equation}\label{desisegform}
\left|A_{_{1}}\right|^2(p) \leq n-1
\end{equation}
but, by hypothesis, the equality on $(\ref{gappanelprop})$ occur for some $q \in C_{\Gamma}(r_1,r_2)$, therefore must occur too on $(\ref{desisegform})$ for some $p \in \Gamma$. Again, the \textbf{Theorem \ref{ChernCarmoKoba}.} ensures that $\Gamma \hookrightarrow \S^n$ is a Clifford torus and the support cone of the trunk $C_{\Gamma}(r_1,r_2)$ is considered over on such torus. 
\end{proof}

\begin{corollary}(Teorema $\ref{teoprin1cap3}$)\label{classificacaoaneleu} Let $\Sigma^n \hookrightarrow \mathcal{A}(r_1,r_2)$ be an immersed free boundary minimal hypersurface in Euclidean anullus $(n+1)$-dimensional. Assume that, 

\begin{equation}\label{gapanel2}
\left|A\right|^2 \leq \frac{n^2}{4r_{_2}^{2}}
\end{equation}
than, 

$i)$ If $n = 2$ then $\Sigma^n \hookrightarrow \mathcal{A}(r_1,r_2)$ is the totally geodesic annulus.

$ii)$ If $n\geq3$ and $r_{_1}^2 < \frac{4(n-1)}{n^2}\,r_{_2}^2$ then $\Sigma^n \hookrightarrow \mathcal{A}(r_1,r_2)$ is a totally geodesic annulus. 

$iii)$ If $n\geq3$ and $r_{_1}^2 = \frac{4(n-1)}{n^2}\,r_{_2}^2$ then either $\Sigma^n \hookrightarrow \mathcal{A}(r_1,r_2)$ is a totally geodesic annulus or $\Sigma^n$ is a trunk of cone whose support cone is considered over the Clifford Torus in  $\S^n$.
\end{corollary}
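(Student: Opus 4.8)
The plan is to reduce the statement to the already-established classification of minimal cone trunks, Proposition \ref{classifictroncocone}, applied with the constant $a_0 = 1$, after first showing that $\Sigma$ must be such a trunk.

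First I would feed the hypothesis $(\ref{gapanel2})$ into Theorem \ref{teoprin1cap3}. Since $\mathcal{A}(r_1,r_2)$ carries the flat metric ($u \equiv 0$, whence $\bar r = r$ and $\bar r_2 = r_2$), the theorem gives that the support function $v = \langle \vec{x},N\rangle$ vanishes identically on $\Sigma$ and that $\partial\Sigma$ meets both components of $\partial\mathcal{A}(r_1,r_2)$. The vanishing of $v$ says the position field $\vec{x}$ is everywhere tangent to $\Sigma$, so the radial ray through each point stays in $\Sigma$; thus $\Sigma$ is foliated by radial segments and is a trunk of a cone $C_\Gamma$ over a link $\Gamma^{n-1} \subset \S^n$. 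Because the free-boundary condition puts all of $\partial\Sigma$ on the two bounding spheres (no lateral boundary) and $\partial\Sigma$ reaches both of them, the link $\Gamma$ is a closed hypersurface and $\Sigma = C_\Gamma(r_1,r_2)$; minimality of $\Sigma$ together with Lemma \ref{conemin} then makes $\Gamma$ a closed minimal hypersurface of $\S^n$.

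Once $\Sigma = C_\Gamma(r_1,r_2)$ is in place, the bound $(\ref{gapanel2})$ is exactly $(\ref{gappanelprop})$ with $a_0 = 1$, and I would read off the three cases from Proposition \ref{classifictroncocone}. For $n = 2$ one computes $\frac{n^2}{n-1} = 4 = \frac{4}{a_0}$, so the supplementary clause of part $i)$ applies: the radius condition $(\ref{condraio})$ holds automatically (because $r_1 < r_2$) and $\Sigma$ is totally geodesic, which is case $i)$. For $n \geq 3$ one has $(n-2)^2 > 0$, i.e. $\frac{n^2}{n-1} > 4 = \frac{4}{a_0}$, so we are in the second regime. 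In case $ii)$ the hypothesis $r_1^2 < \frac{4(n-1)}{n^2}r_2^2$ is precisely $(\ref{condraio})$ with $a_0 = 1$, and part $i)$ of the proposition gives a totally geodesic annulus. In case $iii)$ the hypothesis $r_1^2 = \frac{4(n-1)}{n^2}r_2^2$ is $(\ref{condraio002})$; evaluating $(\ref{segform04})$ on the points of $\Gamma$ lying under $|q| = r_1$ and using $(\ref{gapanel2})$ gives $|A_1|^2(p) = |A|^2(q)\,|q|^2 \leq \frac{n^2}{4r_2^2}\,r_1^2 = n-1$ for every $p \in \Gamma$, so Theorem \ref{ChernCarmoKoba} yields the dichotomy: either $|A_1|^2 \equiv 0$ and $\Gamma$ is an equatorial $\S^{n-1}$ (hence $\Sigma$ totally geodesic, by Remark \ref{obsconeminsse}), or $|A_1|^2 \equiv n-1$ and $\Gamma$ is a Clifford torus, so $\Sigma$ is a trunk of the cone over it.

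The hard part will be the first step: rigorously upgrading ``$\langle \vec{x}, N\rangle \equiv 0$'' to ``$\Sigma$ is the trunk of a cone over a \emph{closed} minimal link in $\S^n$.'' Concretely, one must argue that the radial foliation produced by the tangency of $\vec{x}$ has no lateral boundary inside $\mathcal{A}(r_1,r_2)$ and that the link closes up, which is exactly where the two conclusions of Theorem \ref{teoprin1cap3} (vanishing support function \emph{and} $\partial\Sigma$ meeting both spheres) are both needed. After that, everything is bookkeeping through Proposition \ref{classifictroncocone} and Theorem \ref{ChernCarmoKoba}.
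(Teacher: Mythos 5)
Your proposal is correct and follows essentially the same route as the paper: invoke Theorem \ref{teoprin1cap3} to get $\langle \vec{x},N\rangle \equiv 0$ and that $\partial\Sigma$ meets both boundary spheres, conclude $\Sigma$ is a trunk of a minimal cone, and then apply Proposition \ref{classifictroncocone} with $a_0=1$. The only difference is that you spell out the upgrade from ``vanishing support function'' to ``trunk of a cone over a closed minimal link'' and the case bookkeeping in more detail than the paper, which simply states these steps.
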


\begin{proof} By \textbf{Teorema \ref{teoprin1cap3}.} the support function $v=\left\langle \vec{x},N\right\rangle$ is identically null on $\Sigma^n$ and the boundary $\partial \Sigma$ intersect the two connected components of $\partial \mathcal{A}(r_1,r_2)$. In summary, we have that $\Sigma$ is a subset of a minimal cone in $\R^{n+1}$. The conclusions follows of  \textbf{Proposition \ref{classifictroncocone}.} by choose of $a_{_0}=1$.  
\end{proof}

\subsection{The case of annulus conformal to euclidian annulus}

Now we will study the case of a immersion of free boundary minimal hypersurface $\Sigma^n \hookrightarrow \anelconf$ where the annulus $\mathcal{A}$ is not necessarily euclidean. When $\Sigma^n$ is equipped with euclidean geometry of $\B^{n+1}_{r_{_2}} \setminus \B^{n+1}_{r_{_1}}$ we'll write $\Sigma^{n}_{\delta}$, and the second fundamental form of the respective geometry will denoted by $A_{\delta}$. If $p \in \Sigma^n$ has euclidean distance up to origin given by $r=\left|p\right|$, then the distance of $p$ up to origin with respect the metric $\bar{g}=e^{2u(\left|x\right|^2)}\left\langle,  \right\rangle$ is given by equation,
\begin{equation}\label{raioconfint}
\bar{r}=rI(r) 
\end{equation}
where $I(r)=\displaystyle \int_{0}^{1} e^{u(t^2r^2)} dt$. For the following purposes, we define,
$$
m_0=\sup\{e^{2u(\left|x\right|^2)}; x \in \aneleu \}
$$
\begin{corollary}(Teorema $\ref{teoprin1cap3}$)\label{classificconf} Let $\Sigma^n \hookrightarrow \anelconf$ be a immersed free boundary minimal hypersurface in $(n+1)$-dimensional annulus conformal to euclidean annulus $\aneleu$. Assume that, 
\begin{equation}\label{gapanel3}
\left|A\right|^2 \leq \frac{n^2}{4\bar{r}_{_2}^{2}}
\end{equation}
than, 

$\textbf{1})$ If $\frac{n^2}{(n-1)} \leq 4\left(\dfrac{I(r_{_2})^2}{m_0} \right)$, $\Sigma^n \hookrightarrow \anelconf$ is a totally geodesic disc.\\ 

$\textbf{2})$ If $ \frac{n^2}{(n-1)} > 4\left(\dfrac{I(r_{_2})^2}{m_0} \right)$ and $r_{_1}^2 < \dfrac{4(n-1)}{n^2}\left( \dfrac{I(r_{_2})^2}{m_0}\right)\,r_{_2}^2$, $\Sigma^n \hookrightarrow \anelconf$ is the totally geodesic disc.\\

$\textbf{3})$ If $\frac{n^2}{(n-1)} > 4\left(\dfrac{I(r_{_2})^2}{m_0} \right)$ and $r_{_1}^2 = \dfrac{4(n-1)}{n^2}\left( \dfrac{I(r_{_2})^2}{m_0}\right)\,r_{_2}^2$ then either $\Sigma^n \hookrightarrow \anelconf$ is totally geodesic annulus or $\Sigma^n$ is a trunk of cone whose support cone is considered over a Clifford torus in $\S^n$ when considered like $\Sigma^n \hookrightarrow \aneleu$
\end{corollary}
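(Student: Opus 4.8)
The plan is to transfer the problem to the Euclidean annulus and invoke Proposition \ref{classifictroncocone}, paying for the conformal change with the factor $m_0/I(r_2)^2$.

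First I would apply Theorem \ref{teoprin1cap3} verbatim: since $\Sigma^n \hookrightarrow \anelconf$ is free boundary minimal and satisfies (\ref{gapanel3}) with the conformal radius $\bar{r}_2$, the support function $v = \bar{g}(\vec{x},\bar{N})$ vanishes identically on $\Sigma$, and $\partial\Sigma$ meets both components of $\partial\anelconf$. As in the proof of Theorem \ref{teopringaphad}, $v \equiv 0$ forces $\langle \vec{x},N\rangle \equiv 0$ for the Euclidean unit normal $N = e^{h}\bar{N}$; combined with minimality in $\bar{g}$ and the relation $\bar{k}_i = e^{-h}(k_i - 2u'\langle\vec{x},N\rangle)$ this yields $\sum k_i = 0$, so the Euclidean hypersurface $\Sigma_\delta$ is a minimal cone $C_\Gamma$ over a closed minimal $\Gamma^{n-1} \subset \S^n$ (Lemma \ref{conemin}).

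The crucial second step is to convert the conformal pinching into a Euclidean one. Because $\langle\vec{x},N\rangle \equiv 0$, the principal-curvature relation collapses to $\bar{k}_i = e^{-h}k_i$, whence $|A|^2 = \sum \bar{k}_i^2 = e^{-2h}|A_\delta|^2$. Since $\Sigma_\delta$ lies in $\aneleu$, we have $e^{2h}\leq m_0$ pointwise, so that $|A_\delta|^2 = e^{2h}|A|^2 \leq m_0\,|A|^2$. Substituting $\bar{r}_2 = r_2 I(r_2)$ into the hypothesis gives
$$|A_\delta|^2 \leq m_0\cdot\frac{n^2}{4\bar{r}_2^2} = \frac{n^2}{4 r_2^2}\cdot\frac{m_0}{I(r_2)^2},$$
which is exactly condition (\ref{gappanelprop}) of Proposition \ref{classifictroncocone} for the constant $a_0 := m_0/I(r_2)^2$.

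Finally I would read off the three cases from Proposition \ref{classifictroncocone} with this $a_0$. The threshold $\tfrac{n^2}{n-1}\leq 4/a_0$ becomes $\tfrac{n^2}{n-1}\leq 4\,I(r_2)^2/m_0$ (case 1); the gap condition $r_1^2 < \tfrac{4(n-1)}{n^2 a_0}r_2^2$ becomes $r_1^2 < \tfrac{4(n-1)}{n^2}\tfrac{I(r_2)^2}{m_0}r_2^2$ (case 2); and the borderline $r_1^2 = \tfrac{4(n-1)}{n^2}\tfrac{I(r_2)^2}{m_0}r_2^2$ with equality attained forces, via Theorem \ref{ChernCarmoKoba}, that $\Gamma$ is a Clifford torus (case 3). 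I expect the main obstacle to be the second step --- pinning down that $|A|^2$ is measured in $\bar{g}$ and that the vanishing of the support function is exactly what makes $|A_\delta|^2 = e^{2h}|A|^2$ hold, so that $m_0$ enters with the correct power; once this is secured the remaining work is the bookkeeping that matches the three inequalities to those of Proposition \ref{classifictroncocone}.
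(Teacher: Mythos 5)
Your proposal is correct and follows essentially the same route as the paper: apply Theorem \ref{teoprin1cap3} to kill the support function, use the conformal relation between principal curvatures to conclude $\Sigma$ is a Euclidean minimal trunk of cone with $|A_\delta|^2 = e^{2h}|A|^2$, convert the pinching into $|A_\delta|^2 \leq \frac{n^2}{4r_2^2}a_0$ with $a_0 = m_0/I(r_2)^2$, and then invoke Proposition \ref{classifictroncocone}. The bookkeeping matching the three cases to the hypotheses of that proposition is exactly what the paper does.
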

\begin{proof} 
By \textbf{Theorem \ref{teoprin1cap3}.} the support function $v=\bar{g}(\vec{x},\bar{N})=e^{2h}\left\langle \vec{x},\bar{N}\right\rangle$ is identically null on $\Sigma^n$ and the boundary $\partial \Sigma$ intersect the two connected components of $\partial \anelconf$. The principal curvatures $\bar{k}_i$ e $k_i$ of $\Sigma^n$ and $\Sigma_{\delta}^n$ respectively, are related by the equation,
\begin{equation}\label{cruvprincanel}
\bar{k}_i=\frac{1}{e^h}k_i - 2u'(\left|x\right|^2)\left\langle \vec{x},N\right\rangle
\end{equation}
where $N$ is the normal vector to $\Sigma_{\delta}^n$ given by $N=e^h\bar{N}$.  As $v \equiv 0$ we have $0\equiv \left\langle \vec{x},\bar{N}\right\rangle=e^{-h}\left\langle \vec{x},N\right\rangle$, therefore $\left\langle \vec{x},N\right\rangle \equiv 0$. Thus, the equation $(\ref{cruvprincanel})$ says that $\Sigma^n$ is a free boundary minimal hypersurface in $\aneleu$ therefore $\Sigma^n$ is a minimal trunk of cone $C_n$ considered over some minimal hypersurface $\Gamma_1 \hookrightarrow \S^n$ and moreover, 
$$
\bar{k}_i=\frac{1}{e^{h}}k_i \Rightarrow e^{2h}\left|A\right|^2=\left|A_{\delta}\right|^2
$$
where $A_{\delta}$ denotes the second fundamental form of $\Sigma_{\delta}^n$. By definition of $m_0$ we have,
$$
\left|A_{\delta}\right|^2=\left|A\right|^2e^{2h} \leq \left|A\right|^2 m_0 
$$
Now by the hypothesis on $\left|A\right|^2$ we have,
$$
\left|A_{\delta}\right|^2 \leq \left|A\right|^2 m_0 \leq \frac{n^2}{4\bar{r}_{_2}^{2}} m_0 =\frac{n^2}{4r_{_2}^{2}} \frac{r_{_2}^2}{\bar{r}_{_2}^{2}}m_0  = \frac{n^2}{4r_{_2}^{2}} a_0
$$
where $a_0=\dfrac{r_{_2}^2}{\bar{r}_{_2}^{2}}m_0$. Thus,
\begin{equation}\label{gapaux001}
\left|A_{\delta}\right|^2 \leq \frac{n^2}{4r_{_2}^{2}} a_0
\end{equation}
As $I(r_2)^2=\dfrac{\bar{r}_{_2}^{2}}{r_{_2}^2}$, we have $a_0=\dfrac{m_0}{I(r_2)^2}$ therefore,
\begin{equation}\label{gapaux002}
\frac{n^2}{(n-1)} \leq 4\left(\dfrac{I(r_{_2})^2}{m_0} \right)\,\,  \Leftrightarrow \,\, \frac{n^2}{(n-1)} \leq \dfrac{4}{a_0}\,
\end{equation}
Thus, $\Sigma^n$ is a trunk of minimal cone with free boundary in $\aneleu$ which satisfies the conditions $(\ref{gapaux001})$ and $(\ref{gapaux002})$. By item $i)$ of \textbf{Proposition \ref{classifictroncocone}.}, $\Sigma^n \hookrightarrow \aneleu$ is a totally geodesic annulus , i.e $\left|A_{\delta}\right|^2\equiv 0$, but $\left|A_{\delta}\right|^2=\left|A\right|^2e^{2h}$ therefore, $\left|A\right|^2=0$ and follows that $\Sigma^n \hookrightarrow \anelconf$ is a totally geodesic and the item \textbf{1} follows. Moreover we have,
\begin{equation}\label{gapaux003}
\frac{n^2}{(n-1)} > 4\left(\dfrac{I(r_{_2})^2}{m_0} \right)  \,\,  \Leftrightarrow \,\, \frac{n^2}{(n-1)} > \dfrac{4}{a_0}\, 
\end{equation}
further, 
\begin{equation}\label{gapaux004}
r_{_1}^2 < (=) \dfrac{4(n-1)}{n^2}\left( \dfrac{I(r_{_2})^2}{m_0}\right)\,r_{_2}^2 \Leftrightarrow r_{_1}^2 < (=) \dfrac{4(n-1)}{n^2 a_0}r_{_2}^2 
\end{equation}
By the analogous way, the item $i)$ of \textbf{Proposition \ref{classifictroncocone}.} ensure by contitions $(\ref{gapaux001})$, $(\ref{gapaux003})$ and the strict inequality on both $(\ref{gapaux004})$, which $\Sigma^n \hookrightarrow \aneleu$ is a totally geodesic annulus, and by the same previous argument we have that $\Sigma^n \hookrightarrow \anelconf$ is totally geodesic annulus and the item \textbf{2} follows. Finally, the item $ii)$ of the same proposition ensure by conditions $(\ref{gapaux001})$, $(\ref{gapaux003})$ with equality on $(\ref{gapaux004})$ wich one of the situation occur; either $\left|A_{\delta}\right|^2\equiv\left|A\right|^2\equiv0$ or $\Sigma^n \hookrightarrow \aneleu$ is a trunk of cone whose support cone is considered over a Clifford torus and the item \textbf{3} follows. 
\end{proof}
\begin{example} 
Let $\H^{n+1}$ be hyperbolic space modeled by Poincaré disc; $\H^{n+1}=(\B^{n+1}_1,\bar{g})$, where $\bar{g}=e^{2h}\left\langle , \right\rangle$ with $h(x)=u(\left|x\right|^2)=\ln\left(\frac{2}{1-\left|x\right|^2}\right)$. Consider $r_1<r_2<1$ and define $\anelconf:=(\aneleu,\bar{g})$. By $(\ref{raioconfint})$ we have, 
$$
\bar{r}_i=r_i\frac{2\tanh^{-1}(r_i)}{r_i}=2\tanh^{-1}(r_i)
$$
For this case, $m_0=\dfrac{4}{(1-r_{_2}^2)^2}$. Define, 
$$
f(r):=\dfrac{4I(r)^2}{m_0} = 4[\tanh^{-1}(r)]^2\left(\frac{1-r^2}{r}\right)^2
$$
the function $f:(0,1)\rightarrow\R$ satisfies $\lim_{r\rightarrow 0}f(r)=4$, $\lim_{r\rightarrow 1}f(r)=0$ and $f'(r)<0$. Thus, $f$ is  decresing, and $0 \leq f(r) \leq 4$.

Let $\Sigma^n \hookrightarrow \anelconf$ be a free boundary minimal surface such that,
$$
\left|A\right|^2 \leq \frac{n^2}{4\bar{r}_{_2}^{2}}
$$
Note that for  $n \geq 2$ we have, 
$$ 
\frac{n^2}{(n-1)} > f(r)=\dfrac{4I(r)^2}{m_0}
$$
Thus,

$A)$ If $r_{_1}^2 < \dfrac{(n-1)}{n^2}f(r_{_2})r_{_2}^2$, than $\Sigma^n$ is a totally geodesic annulus.\\

$B)$ If $r_{_1}^2 = \dfrac{(n-1)}{n^2}f(r_{_2})r_{_2}^2$ than, $\Sigma^n$ is a totally geodesic annulus for $n=2$ and if $n\geq3$ we have two situations; either $\Sigma^n$ is a totally geodesic annulus or a trunk of cone whose support cone is considered over the Clifford torus $\mathbb{T}_{m,n}\hookrightarrow \S^n$ when we consider $\Sigma^n \hookrightarrow \aneleu$. 
\end{example}

\end{document}